\documentclass[11pt]{amsart}

\usepackage{amsmath, amssymb, amsthm, amsfonts, enumerate, color, comment}
\usepackage{mathrsfs}
\usepackage{parskip}
\usepackage{xcolor}

\usepackage{mathtools}
\mathtoolsset{showonlyrefs}

\usepackage{palatino}
\usepackage{graphicx}

\usepackage{parskip}

\numberwithin{equation}{section}
\theoremstyle{plain}
\newtheorem{Proposition}[equation]{Proposition}
\newtheorem{Corollary}[equation]{Corollary}
\newtheorem*{Corollary*}{Corollary}
\newtheorem{Theorem}[equation]{Theorem}
\newtheorem*{Theorem*}{Theorem}
\newtheorem{Lemma}[equation]{Lemma}
\theoremstyle{definition}

\newtheorem{Example}[equation]{Example}

\usepackage{enumitem}
\setlist[enumerate]{leftmargin=*}
\setlist[itemize]{leftmargin=*}

\setlist[enumerate,1]{label=(\alph*),font=\upshape}

\setlist[enumerate,2]{label=(\roman*),font=\upshape}

\def\C{\mathbb{C}}

\def\N{\mathbb{N}}

\usepackage{caption}
\usepackage{subcaption}

\renewcommand{\leq}{\leqslant}
\renewcommand{\geq}{\geqslant}
\renewcommand{\subset}{\subseteq}
\renewcommand{\phi}{\varphi}
\renewcommand{\vec}[1]{{\bf #1}}
 \renewcommand{\Re}[1]{\operatorname{Re} #1 }

\usepackage{xcolor}

	\author[A. Belli]{Anil Belli}
	\address{Department of Mathematics, University of Thessaloniki, 54124 Thessaloniki,
Greece}
	\email{anilbelli@math.auth.gr}
	
	\author[U. Gul]{Ugur Gul}
	\address{Hacettepe University, Department of Mathematics, 06800, Beytepe, Ankara, Turkey}
	\email{gulugur@gmail.com}

\author[W. Ross]{William T. Ross}
	\address{Department of Mathematics and Computer Science, University of Richmond, Richmond, VA 23173, USA}
	\email{wross@richmond.edu}
	
	\author[A. Siskakis]{Aristomenis G. Siskakis}
	\address{Department of Mathematics, University of Thessaloniki, 54124 Thessaloniki,
Greece}
	\email{siskakis@math.auth.gr}
	
	\subjclass[2010]{26A42, 47B38}

\title{Crescents and the real variable Ces\`{a}ro operator}

\keywords{Semigroups, Ces\`{a}ro operator, $L^{p}(0, 1)$, spectral properties, crescent domains, invariant subspaces.}

\begin{document}

\begin{abstract}
This paper explores a version of the classical Ces\`{a}ro integral operator for the Lebesgue space $L^p(0, 1)$ where we discuss its norm, spectral properties, cyclicity, and invariant subspaces. The spectrum of the Ces\`{a}ro operator will be a crescent domain whose geometry depends on $p$. An important tool will be semigroups of weighted composition operators on $L^p(0, 1)$.
\end{abstract}

\maketitle

\section{Introduction}

In \cite{CL2} we examined the {\em real variable Ces\'{a}ro operator $C$} on  $L^2(0, 1)$ defined by 
\begin{equation}\label{CCCcc}
(C f)(x) := \frac{1}{x} \int_{0}^{x} \frac{f(t)}{1 - t} dt, \quad 0 < x < 1,
\end{equation} and showed  that $C$ defines a bounded linear operator satisfying 
\begin{equation}\label{resultsp=2}
\|C\| = 2, \; \sigma(C) = \partial D(1, 1), \;  \sigma_{p}(C)  = \varnothing.
\end{equation}
Here, $\|C\|$ denotes the operator norm of $C$, $D(a, r) := \{z: |z - a| < r\} $, $\partial D(a, r)$ its boundary, $\sigma(C)$ denotes the spectrum of $C$, and $\sigma_{p}(C)$ denotes its point spectrum (i.e., the eigenvalues). Our inspiration for  the real variable Ces\`{a}ro operator comes from the classical Ces\`{a}ro operator on the Hardy space $H^2$ of $D(0, 1)$, defined by a similar looking integral formula 
$$\frac{1}{z} \int_{0}^{z} \frac{f(\xi)}{1 - \xi} d\xi, \quad z \in D(0, 1),$$
where the above integration is not along the interval  $(0, 1)$, as in \eqref{CCCcc}, but  along any rectifiable path from $\xi = 0$ to $\xi = z$ in $D(0, 1)$. The Ces\`{a}ro operator can  be equivalently defined by means of the famous Ces\`{a}ro matrix \cite{MR187085}. Since this classical operator has been studied for many years, quite a lot is known \cite{MR187085, MR5037546}, including many results in the Hardy space setting $H^p$, $0 < p < \infty$.

This paper expands the discussion in \cite{CL2} to the real variable Ces\`{a}ro operator 
$$(C_{p} f)(x) :=  \frac{1}{x} \int_{0}^{x} \frac{f(t)}{1 - t} dt, \quad 0 < x < 1,$$
on $L^p(0, 1)$, $1 < p < \infty$. Throughout this paper,  $\|\cdot\|_{p}$
denotes the norm on $L^p(0, 1)$,
$q $ the H\"{o}lder conjugate index to $p$, and  $\|C_{p}\| := \|C_{p}\|_{L^p \to L^p}$ the operator norm of $C_p$ on $L^p(0, 1)$. Our main result  is the following. 

\begin{Theorem}\label{MainT}
The Ces\`{a}ro operator $C_p$ is bounded on $L^p(0, 1)$  when $1 < p < \infty$ while $C_1$ is unbounded on $L^1(0, 1)$. Moreover, we have the following.
\begin{enumerate}
\item 
${\displaystyle \|C_{p}\| = \begin{cases}
q & \mbox{if $1 < p < 2$,}\\
p & \mbox{if $p \geq 2$.}
\end{cases}}$
\item The point spectrum of $C_p$  is the open crescent domain 
$D(\frac{q}{2}, \frac{q}{2}) \setminus \overline{D(\frac{p}{2}, \frac{p}{2})}$
when $1 < p < 2$ and is empty when $p \geq 2$.
\item
${\displaystyle \sigma(C_{p}) = \begin{cases}
{\displaystyle \overline{D(\tfrac{q}{2}, \tfrac{q}{2})} \setminus D(\tfrac{p}{2}, \tfrac{p}{2})} & \mbox{if $1 < p < 2$,}\\[3pt]
{\displaystyle  \overline{D(\tfrac{p}{2}, \tfrac{p}{2})} \setminus D(\tfrac{q}{2}, \tfrac{q}{2})} & \mbox{if $p \geq 2$.}
\end{cases}}$
\end{enumerate}
\end{Theorem}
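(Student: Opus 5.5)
The plan is to handle the four assertions in the order boundedness and norm, point spectrum, and then the full spectrum. Throughout I would use the splitting $\frac{1}{t(1-t)}=\frac1t+\frac1{1-t}$ and the fact that $x\mapsto \log\frac{x}{1-x}$ straightens the problem.

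\emph{Boundedness and norm.} I would split $f=f\chi_{(0,1/2)}+f\chi_{(1/2,1)}$.
\begin{itemize}
\item Near $0$ the factor $1/(1-t)$ is harmless, and $C_p$ behaves like the Hardy averaging operator $Hf(x)=\frac1x\int_0^x f$, whose $L^p$ norm is $q$.
\item Near $1$, substituting $s=1-t$ turns $\int_0^x \frac{f(t)}{1-t}\,dt$ into an operator of the form $g\mapsto\int_u^1 g(s)\,\frac{ds}{s}$. This is the adjoint Hardy operator, with $L^p$ norm $p$.
\end{itemize}
For the upper bound $\|C_p\|\le\max(p,q)$ I would use a Schur test with the test function $h(x)=x^{-1/(pq)}(1-x)^{-1/(pq)}$, or a weighted-Hardy argument in the form $\int_0^1|C_pf|^p\,dx\le\max(p,q)^p\|f\|_p^p$. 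For the lower bound I would test on $f_\varepsilon(x)=x^{-1/p+\varepsilon}$ when $p<2$, and on $f_\varepsilon(x)=(1-x)^{-1/p+\varepsilon}$ when $p\ge2$, and let $\varepsilon\to0^+$. The same families with $p=1$, i.e.\ concentrating mass near $1$, show that $C_1$ is unbounded.

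\emph{Point spectrum.} $C_p$ is injective, since $C_pf=0$ forces $f=0$ after differentiating $xC_pf$. So $0\notin\sigma_p(C_p)$. Suppose $\lambda\neq0$ and $C_pf=\lambda f$. Then $x\mapsto xf(x)$ is absolutely continuous, and differentiating gives
$$\frac{f}{1-x}=\lambda f+\lambda x f'.$$
The unique solution up to a constant is
$$f_\lambda(x)=x^{1/\lambda-1}(1-x)^{-1/\lambda}=\frac1x\Bigl(\frac{x}{1-x}\Bigr)^{1/\lambda}.$$
Conversely, $f_\lambda$ is an eigenfunction as soon as $f_\lambda/(1-t)$ is integrable near $0$, which holds when $\Re(1/\lambda)>0$. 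Now $f_\lambda\in L^p$ if and only if
$$\tfrac1q<\Re(1/\lambda)<\tfrac1p.$$
This strip is nonempty exactly when $p<2$. Under $\lambda\mapsto1/\lambda$, the condition $\Re(1/\lambda)>1/q$ is $\lambda\in D(\frac q2,\frac q2)$, and $\Re(1/\lambda)<1/p$ is $\lambda\notin\overline{D(\frac p2,\frac p2)}$. This gives part (b).

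\emph{Spectrum.} First I would compute the adjoint:
$$C_p^*g(t)=\frac1{1-t}\int_t^1\frac{g(x)}{x}\,dx \quad\text{on } L^q.$$
The same ODE argument gives eigenfunctions $x^{-1/\lambda}(1-x)^{1/\lambda-1}$, lying in $L^q$ exactly when $\frac1p<\Re(1/\lambda)<\frac1q$. So for $p>2$ the open crescent $D(\frac p2,\frac p2)\setminus\overline{D(\frac q2,\frac q2)}$ lies in $\sigma_p(C_p^*)$. This set is invariant under conjugation, so it lies in $\sigma(C_p)$. For $p<2$ the open crescent is the point spectrum itself. In both cases, closedness of the spectrum gives that $\sigma(C_p)$ contains the closed crescent in (c). For $p=2$ the crescent degenerates to the circle $\partial D(1,1)$, recovering \eqref{resultsp=2}. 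There, approximate eigenvectors $f_\lambda$ truncated, with $\Re(1/\lambda)=\frac12$, handle the inclusion.

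The remaining and main step is the reverse inclusion: $\lambda-C_p$ must be invertible for $\lambda$ outside the closed crescent. I would transfer $C_p$ by the substitution $x=e^{u}/(1+e^{u})$ to a weighted $L^p(\mathbb R)$ space. There the operator becomes a convolution-type operator commuting with translations, and equivalently $C_p$ is a resolvent-type integral $\int_0^\infty e^{-s}T_s\,ds$ of a group $(T_s)$ of weighted composition operators on $L^p(0,1)$, namely $T_sf=w_s\cdot(f\circ\phi_s)$, where $\phi_s$ is the flow of translation in $u$. I would compute $\|T_s\|$ exactly for $s>0$ and $s<0$; the two exponential rates should be $1/p$ and $1/q$. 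By Hille--Yosida, the generator $A$ then has spectrum in the closed strip between these two rates. Next I would identify $C_p$ as $(I-A)^{-1}$ (or a Möbius image of the generator resolvent) via the eigenfunction relation. The spectral mapping theorem for resolvents then pulls the strip $\min(\frac1p,\frac1q)\le\Re w\le\max(\frac1p,\frac1q)$ back, under $w=1/\lambda$, to exactly the closed crescent. Getting the exact growth bounds of $T_s$ in both time directions, and pinning down the precise relation between $C_p$ and the generator, is where I expect the real work to lie.
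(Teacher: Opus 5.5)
Your proof of the hard inclusion (spectrum contained in the closed crescent) is the paper's argument. The paper uses the group $S_tf=\frac{\phi_t(x)}{x}f(\phi_t(x))$, $t\in\mathbb{R}$, whose flow $\phi_t$ is translation by $-t$ in $u=\log\frac{x}{1-x}$, and writes $C_p=\int_0^\infty S_t\,dt=(-A_p)^{-1}$. It needs only the upper bounds $\|S_t^{+}\|\le e^{-t/p}$ and $\|S_t^{-}\|\le e^{t/q}$ for $p\ge 2$, with $p,q$ swapped for $p<2$; for $p<2$ the paper phrases the backward bound through the flip $\mathcal{F}$. These bounds are read off from $\|S_tf\|_p^p=e^{t(1-p)}\int_0^1|f(u)|^p((e^t-1)u+1)^{p-2}\,du$, so you do not need exact values of $\|T_s\|$.

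Elsewhere your route genuinely differs from the paper's.
\begin{itemize}
\item \emph{Norm.} The paper bounds $\|C_p\|\le\int_0^\infty\|S_t\|\,dt$ and takes the lower bound from the spectral radius, so its norm formula depends on the full spectral computation. Your Schur test is independent of that and does give the sharp constant. With $y=x/(1-x)$, the first Schur quotient is $y^{-1/q}(1+y)^{1-2/p}\int_0^y s^{-1/p}(1+s)^{2/p-1}\,ds\le\max(p,q)$, and the second is the same expression with $p$ and $q$ swapped. Your test functions then give $\|C_p\|\ge q$ and $\|C_p\|\ge p$ directly for every $p$, which also shows which endpoint produces the norm.
\item \emph{Crescent inside the spectrum, $p>2$.} The paper shows $(\lambda+1)(1-x)^{\lambda+1}$ is not in the range of $\lambda I-A_p$. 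Your eigenfunctions of $C_p^*$ give the slightly sharper fact that the open crescent is residual spectrum.
\item \emph{The case $p=2$.} Your truncated eigenfunctions settle it directly. There the paper's open-strip argument has nothing to work with, and it cites \cite{CL2} instead.
\end{itemize}

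One step is wrong as written: concentrating mass near $1$ does not show that $C_1$ is unbounded. Near $1$, $C_1$ behaves like $g\mapsto\int_u^1 g(s)\,ds/s$, which is bounded on $L^1$ by Fubini; for instance $\|C_1f\|_1\le 2\|f\|_1$ for $f$ supported in $[\frac12,1]$. Correspondingly, for $f_\varepsilon=(1-x)^{-1+\varepsilon}$ the ratio $\|C_1f_\varepsilon\|_1/\|f_\varepsilon\|_1$ tends to $1$. The blow-up comes from the Hardy part near $0$, whose $L^p$-norm $q$ becomes infinite at $p=1$: for $f_\varepsilon=x^{-1+\varepsilon}$ you get $C_1f_\varepsilon\ge f_\varepsilon/\varepsilon$. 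This is the same mechanism as the paper's example $n\chi_{[0,1/n]}$, so use the family concentrated near $0$.
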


\begin{figure}[h]
\centering
\begin{subfigure}{.37\textwidth}
  \centering
  \includegraphics[width=.8\linewidth]{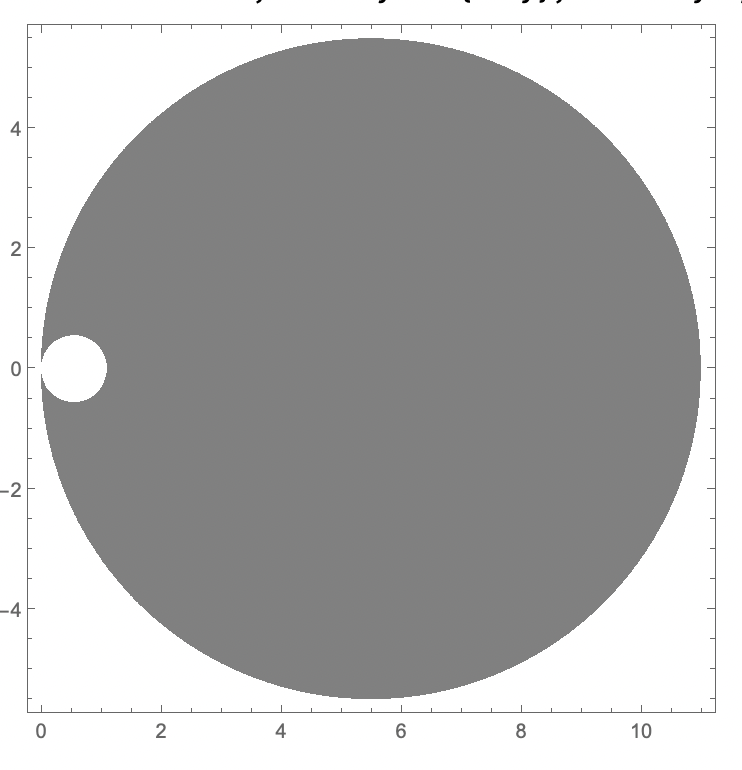}
  \caption{}
  \label{}
\end{subfigure}%
\begin{subfigure}{.37\textwidth}
  \centering
  \includegraphics[width=0.8\linewidth]{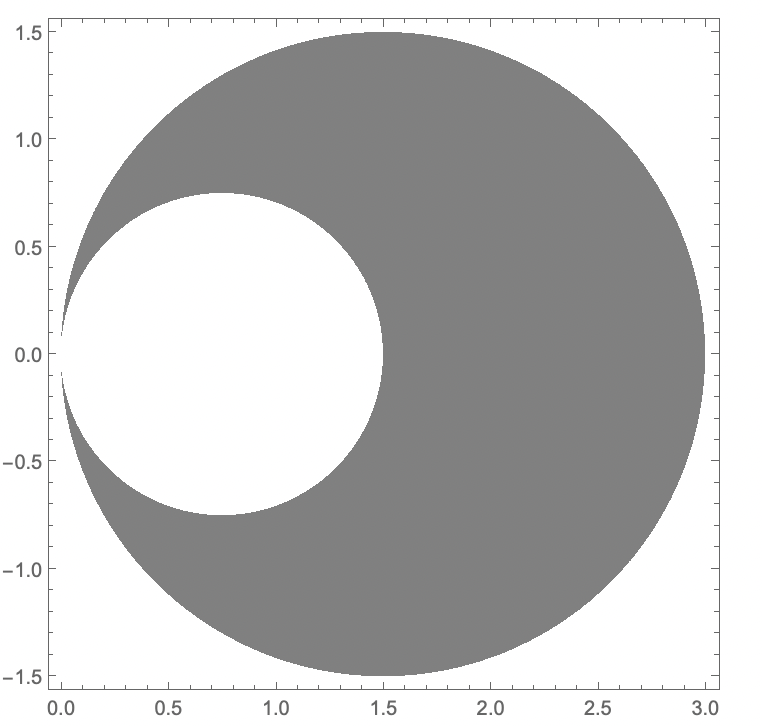}
  \caption{}
  \label{}
\end{subfigure}%
\begin{subfigure}{.37\textwidth}
  \centering
  \includegraphics[width=0.8\linewidth]{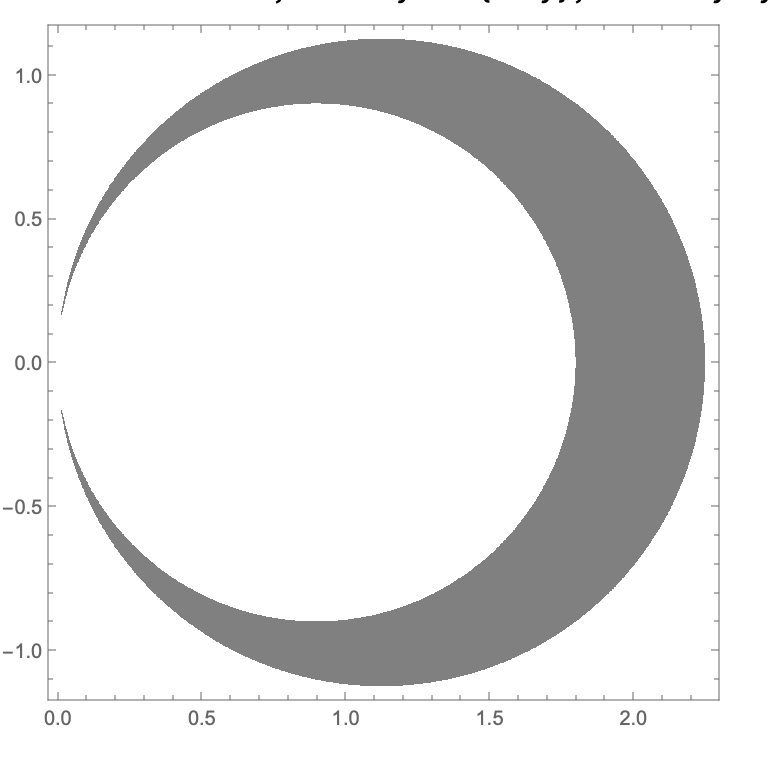}
  \caption{}
 \label{}
\end{subfigure}
\caption{\footnotesize{The spectrum $\sigma(C_p)$ for $1 < p < 2$. (A) is when $p = 1.1$ ($q = 11$); (B) is when $p = 1.5$ ($q = 3$); (C) is when $p = 1.8$ ($q = 2.25$).}}
\label{FigureX1}
\end{figure}
\begin{figure}[h]
\centering
\begin{subfigure}{.37\textwidth}
  \centering
  \includegraphics[width=.8\linewidth]{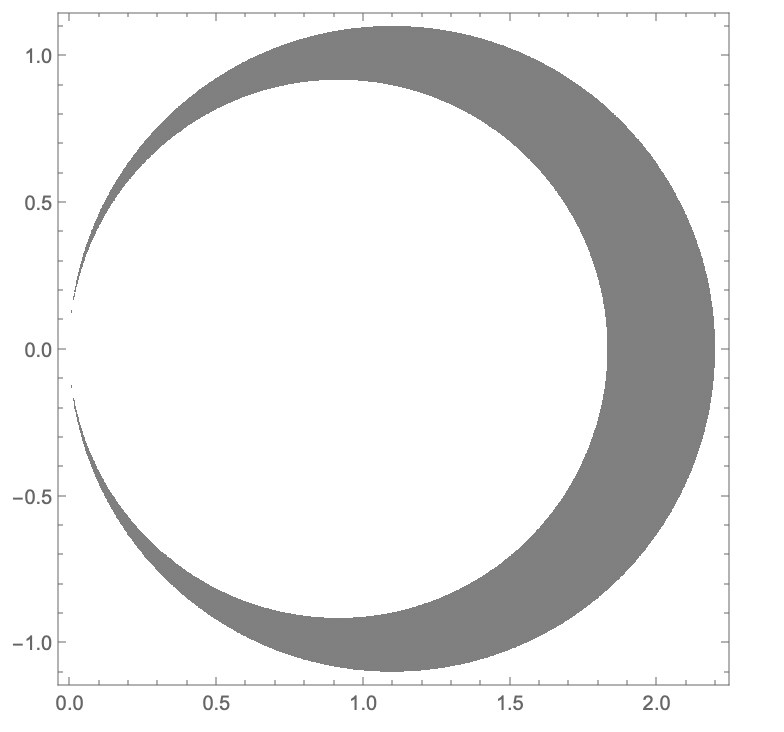}
  \caption{}
  \label{}
\end{subfigure}%
\begin{subfigure}{.37\textwidth}
  \centering
  \includegraphics[width=0.8\linewidth]{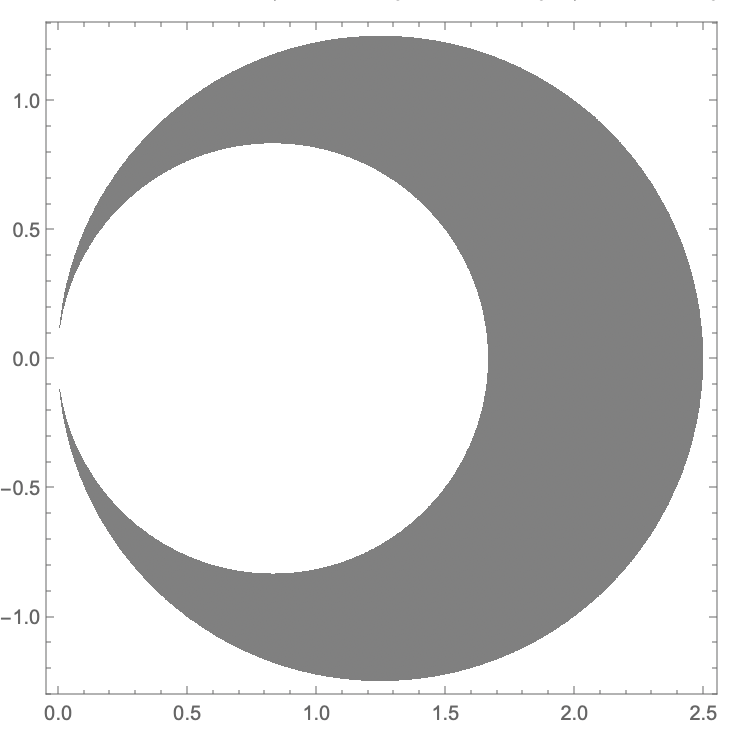}
  \caption{}
  \label{}
\end{subfigure}%
\begin{subfigure}{.37\textwidth}
  \centering
  \includegraphics[width=0.8\linewidth]{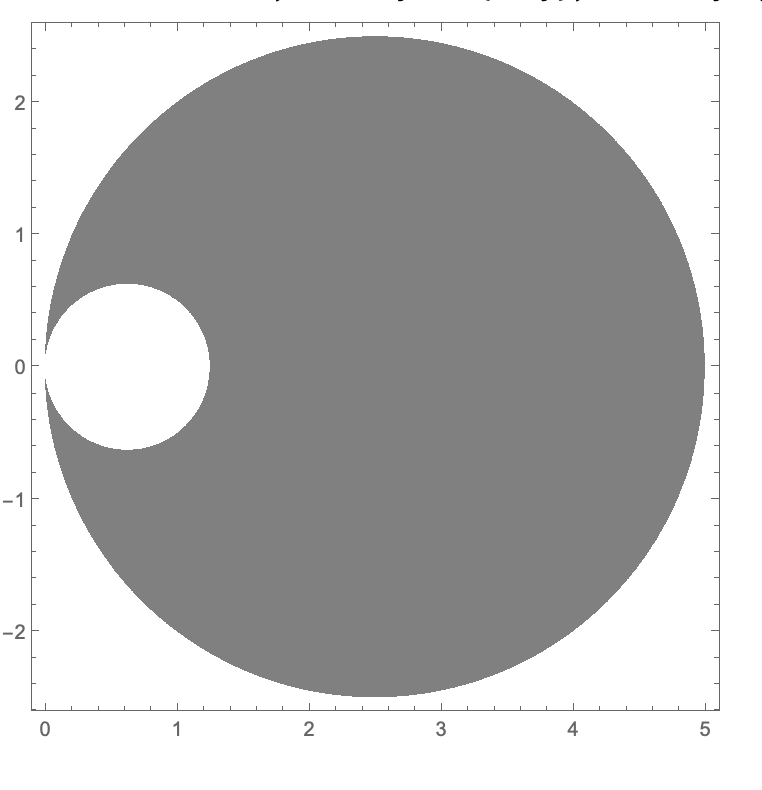}
  \caption{}
  \label{}
\end{subfigure}
\caption{\footnotesize{The spectrum $\sigma(C_p)$ for $p > 2$. (A) is when $p = 2.2$ ($q = 1.83333..$); (B) is when $p = 2.5$ ($q = 1.6666..$); (C) is when $p = 5$ $(q = 1.25$).}}
\label{FigureX2}
\end{figure}

See Fig.~\ref{FigureX1}  and Fig.~\ref{FigureX2} for examples of the crescent domains $\sigma(C_p)$. When $p = 2$, our spectral results  reduce to those in \eqref{resultsp=2}.
As in \cite{CL2}, the path to our main theorem (Theorem \ref{MainT}) involves semigroups of weighted composition operators on $L^p(0, 1)$ and a spectral analysis of their infinitesimal generators. Our semigroup discussion will be put to good use when we survey the invariant subspaces and the cyclicity of $C_p$ in  \S \ref{IJnvariant} and \S \ref{Cy}.

\section{Semigroups and boundedness}\label{sec2}

Sources for the properties of semigroups of  bounded linear operators used below are  \cite{MR1721989, MR629828, MR710486}. In our setting, we explore a semigroup of weighted composition operators on $L^p(0, 1)$. 
Fix $t \geq 0$ and define $\phi_t$ by 
$$\phi_{t}(x) : = \frac{e^{-t} x}{(e^{-t} - 1) x + 1}, \quad 0 \leq x \leq 1.$$
Each  $\phi_t$ satisfies $\phi_{t}(0) = 0$, $\phi_t(1) = 1$, $\phi_t$ is  strictly increasing and continuous and thus maps $[0, 1]$ bijectively onto $[0, 1]$, and $\phi_t(x) \leq x$ for all $0 \leq x \leq 1$ and $t \geq 0$ (see Fig.~\ref{FigureX3}). When $t < 0$, observe that $\phi_t$ still maps $[0, 1]$ bijectively onto $[0, 1]$ but $\phi_t(x) \geq x$ for all $0 \leq x \leq 1$  (see Fig.~ \ref{FigureX3}). One can verify that $\phi_{s} \circ \phi_{t} = \phi_{s + t}$ for all $s$ and $t$.

\begin{figure}[h]
\centering
\begin{subfigure}{.5\textwidth}
  \centering
  \includegraphics[width=.8\linewidth]{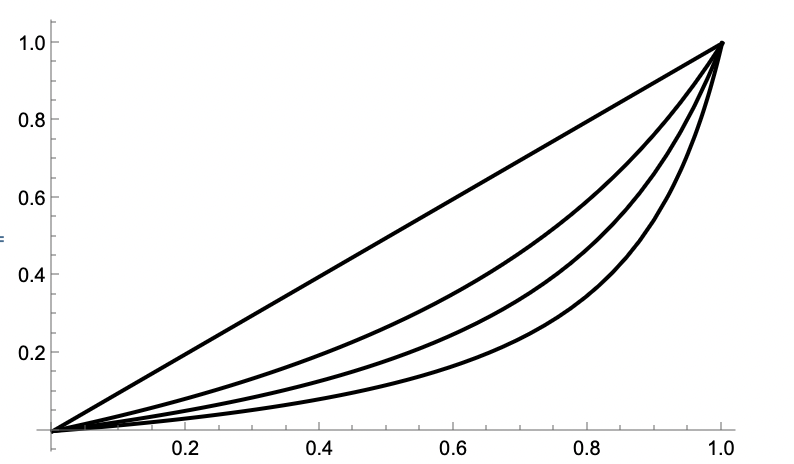}
  \caption{}
  \label{}
\end{subfigure}%
\begin{subfigure}{.5\textwidth}
  \centering
  \includegraphics[width=0.8\linewidth]{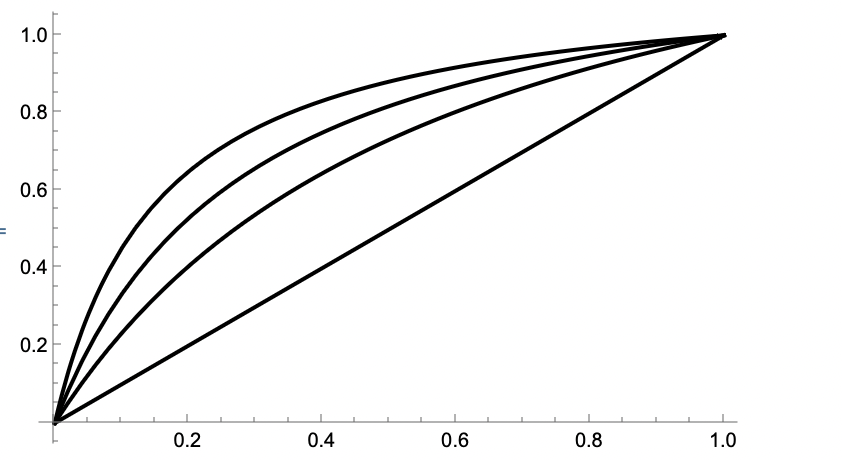}
  \caption{}
  \label{}
\end{subfigure}%
\caption{\footnotesize{(A) The graphs of  $\phi_0, \phi_1, \phi_{3/2}, \phi_2$.  Note $\phi_{t}(x) \leq	 \phi_{s}(x) \leq x = \phi_{0}(x)$ when $0 \leq s \leq t$. (B) The graphs of $\phi_0, \phi_{-1}, \phi_{-3/2}, \phi_{-2}$.  Note $\phi_{s}(x) \geq \phi_{t}(x) \geq x =  \phi_{0}(x)$ when $s \leq t \leq 0$. } }
\label{FigureX3}
\end{figure}

For each $t \geq 0$ define the {\em weighted composition operator} $S_{t}$ on $L^p(0, 1)$ by 
\begin{equation}\label{sdfsdfvVVV}
(S_{t} f)(x) := \frac{\phi_{t}(x)}{x} f(\phi_t(x)), \quad 0 < x < 1.
\end{equation}
A calculation shows that $S_{s} \circ S_{t} = S_{s + t}$ and so $\{S_{t}\}_{t \geq 0}$ forms a semigroup. 
We now  prove that each  $S_t$ defines a bounded linear  operator on $L^p(0, 1)$ as well as  estimate $\|S_{t}\|_{L^p \to L^p}$.

\begin{Proposition}\label{9oijrteglkfdsfawfgrdgfds1}
For the semigroup $\{S_t\}_{t \geq 0}$ we have the following. 
\begin{enumerate}
\item If $1 < p < 2$ then $e^{-\frac{t}{p}} \leq \|S_{t}\|_{L^p \to L^p} \leq e^{-\frac{t}{q}}$ for all $t \geq 0$. 
\item If $2 \leq p < \infty$ then $e^{-\frac{t}{q}} \leq \|S_{t}\|_{L^p \to L^p} \leq e^{-\frac{t}{p}}$ for all $t \geq 0$. 
\end{enumerate}
\end{Proposition}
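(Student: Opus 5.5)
The plan is to compute $\|S_t f\|_p^p$ exactly through a change of variables, which turns $S_t$ into an isometry composed with a multiplication operator. Once that is done the norm can be read off as an essential supremum, and both inequalities in the proposition follow from a single exact formula.

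\emph{Change of variables.} Fix $t\geq 0$ and write $\phi=\phi_t$. Then
$$\|S_tf\|_p^p=\int_0^1 \Big(\frac{\phi(x)}{x}\Big)^p |f(\phi(x))|^p\,dx .$$
We substitute $u=\phi_t(x)$. Since $\phi_s\circ\phi_t=\phi_{s+t}$, the inverse map is $x=\phi_{-t}(u)=\dfrac{e^t u}{(e^t-1)u+1}$. This gives
$$dx=\frac{e^t}{((e^t-1)u+1)^2}\,du, \qquad \frac{\phi(x)}{x}=\frac{u}{x}=\frac{(e^t-1)u+1}{e^t}.$$
Hence
$$\|S_tf\|_p^p=\int_0^1 w_t(u)\,|f(u)|^p\,du,\qquad w_t(u):=e^{-t(p-1)}\big((e^t-1)u+1\big)^{p-2}.$$

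\emph{Norm as a supremum.} It follows that $\|S_t\|^p=\sup_{0\le u\le 1} w_t(u)$. The upper bound is immediate. For the matching lower bound we test against $f=\chi_{I}$, where $I$ is a small interval at the endpoint where the supremum is attained, and use the continuity of $w_t$. Since $(e^t-1)u+1$ increases from $1$ to $e^t$ on $[0,1]$, the location of the supremum depends on the sign of $p-2$:
\begin{itemize}
\item If $p\geq2$, the supremum is at $u=1$. It equals $e^{-t(p-1)}e^{t(p-2)}=e^{-t}$, so $\|S_t\|=e^{-t/p}$.
\item If $1<p<2$, the supremum is at $u=0$. It equals $e^{-t(p-1)}$, so $\|S_t\|=e^{-t(p-1)/p}=e^{-t/q}$.
\end{itemize}

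\emph{The stated inequalities.} These exact values are the upper bounds in (a) and (b). The lower bounds then reduce to comparing exponents.
\begin{itemize}
\item For $1<p<2$ we have $q>2>p$, so $1/q<1/p$ and therefore $e^{-t/p}\le e^{-t/q}$.
\item For $p\ge2$ we have $q\le 2\le p$, so $e^{-t/q}\le e^{-t/p}$.
\end{itemize}

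The only step needing care is justifying that the norm of the multiplication-type operator equals the supremum of $w_t$, rather than merely being bounded by it. This is handled by the endpoint test functions above, together with the continuity of $w_t$ on $[0,1]$.
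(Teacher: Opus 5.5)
Your proof is correct and follows the paper's route: the same substitution $x=\phi_{-t}(u)$ gives $\|S_tf\|_p^p=e^{t(1-p)}\int_0^1|f(u)|^p((e^t-1)u+1)^{p-2}\,du$, and you then use $1\le (e^t-1)u+1\le e^t$. The difference is in the last step. You compute the norm exactly with endpoint test functions, which shows the upper bounds $e^{-t/q}$ and $e^{-t/p}$ are attained, and you get the lower bounds by comparing exponents. The paper instead bounds the weight by its extreme values, which gives the lower bounds in the stronger pointwise form $\|S_tf\|_p\ge e^{-t/p}\|f\|_p$ (resp.\ $e^{-t/q}\|f\|_p$) for every $f$.
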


\begin{proof}
Below use the  change of variables 
$$x=\phi_{-t}(u)=\frac{e^tu}{(e^t-1)u+1}, \quad dx=\frac{e^t}{((e^t-1)u+1)^2}du.$$
Thus, for any $f \in L^p(0, 1)$,
\begin{align}\label{fffppp}
\|S_{t}f\|_{p}^{p} &=\int_0^1\left|\frac{\phi_t(x)}{x}f(\phi_t(x))\right|^pdx\\\notag
&=
\int_0^1\frac{u^p}{(\frac{e^tu}{(e^t-1)u+1})^p}|f(u)|^p\frac{e^t}{((e^t-1)u+1)^2}\,du\\\notag
&=\int_0^1\frac{u^p((e^t-1)u+1)^p}{u^pe^{tp}}|f(u)|^p\frac{e^t}{((e^t-1)u+1)^2}\,du,\\\notag
&=e^{t(1-p)}\int_0^1|f(u)|^p ((e^t-1)u+1)^{p-2}\,du.\label{fffppp}
\end{align}
Now observe 
\begin{equation}\label{eetttu}
1\leq (e^t-1)u+1\leq e^t \; \text{for all $t \geq 0$ and $0 \leq u \leq 1$}.
\end{equation}


which says that 
\begin{equation}\label{uuuCvv}
e^{-t}\leq \frac{1}{(e^t-1)u+1}\leq 1.
\end{equation}
When $1<p<2$,   \eqref{fffppp}  now implies 
$$
e^{-\frac{t}{p}}\leq \|S_t\|_{L^p \to L^p} \leq e^{-\frac{t}{q}} \; \mbox{for all $t \geq 0$},
$$
which proves (a).
When $2 \leq p<\infty$,  \eqref{fffppp} and  \eqref{eetttu} give us 
$$
e^{-\frac{t}{q}}\leq \|S_t\|_{L^p \to L^p}\leq e^{-\frac{t}{p}} \; \mbox{for all $t \geq 0$},
$$ 
which proves (b) and thus completes the proof. 
\end{proof}

\begin{Proposition}
For each $1 < p < \infty$ the semigroup $\{S_t\}_{t \geq 0}$ is strongly continuous on $L^p(0, 1)$ in that 
$$\lim_{t \to 0^{+}} \|S_t f - f\|_{p} = 0, \quad f \in L^p(0, 1).$$
\end{Proposition}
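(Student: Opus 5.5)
The plan is the standard density argument: prove convergence on a dense subclass, then extend using uniform boundedness of the semigroup near $t=0$. By Proposition \ref{9oijrteglkfdsfawfgrdgfds1}, for $0 \leq t \leq 1$ we have $\|S_t\|_{L^p \to L^p} \leq 1$ in both cases, since the upper bounds $e^{-t/q}$ and $e^{-t/p}$ are at most $1$. Hence the family $\{S_t\}_{0 \leq t \leq 1}$ is uniformly bounded by $M = 1$. If $\|S_t g - g\|_p \to 0$ for all $g$ in a dense set $\mathcal{D}$, then for arbitrary $f$ and $g \in \mathcal{D}$,
$$\|S_t f - f\|_p \leq \|S_t(f-g)\|_p + \|S_t g - g\|_p + \|g - f\|_p \leq 2\|f-g\|_p + \|S_t g - g\|_p.$$
Choosing $g$ close to $f$ and then letting $t \to 0^+$ gives the result.

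For the dense class I take $\mathcal{D} = C[0,1]$, the continuous functions on $[0,1]$, which are dense in $L^p(0,1)$ for $1 < p < \infty$. For $g \in C[0,1]$ write
$$(S_t g)(x) - g(x) = \frac{\phi_t(x)}{x}\bigl(g(\phi_t(x)) - g(x)\bigr) + \Bigl(\frac{\phi_t(x)}{x} - 1\Bigr) g(x).$$
Two facts are needed.
\begin{enumerate}
\item $\phi_t(x) \to x$ uniformly on $[0,1]$ as $t \to 0^+$. Indeed, $x - \phi_t(x) = \dfrac{x(1-x)(1-e^{-t})}{(e^{-t}-1)x + 1}$, and the denominator is at least $e^{-t}$, so $|x - \phi_t(x)| \leq e^{t} - 1$.
\item $\phi_t(x)/x = \dfrac{e^{-t}}{(e^{-t}-1)x+1}$ lies in $[e^{-t}, 1]$, so $\sup_{x}\bigl|\phi_t(x)/x - 1\bigr| \leq 1 - e^{-t} \to 0$.
\end{enumerate}

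Combining these with the uniform continuity of $g$ on $[0,1]$, both terms tend to $0$ uniformly in $x$. Therefore $\|S_t g - g\|_\infty \to 0$, and hence $\|S_t g - g\|_p \leq \|S_t g - g\|_\infty \to 0$.

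No step here is a real obstacle. The only point that needs care is the uniform boundedness near $0$, and Proposition \ref{9oijrteglkfdsfawfgrdgfds1} supplies it directly. The remaining ingredient is the elementary uniform estimate on $\phi_t$ near $t = 0$, where I would make sure the bound on the denominator $(e^{-t}-1)x + 1 \geq e^{-t}$ is stated explicitly.
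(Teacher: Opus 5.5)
Your proposal is correct and follows the paper's proof: density of $C[0,1]$, the uniform bound $\|S_t\|_{L^p \to L^p} \leq 1$ from Proposition \ref{9oijrteglkfdsfawfgrdgfds1}, and the same three-term triangle inequality. The only difference is that, for continuous $g$, you prove $\|S_t g - g\|_\infty \to 0$ with explicit uniform estimates on $\phi_t$, whereas the paper uses pointwise convergence plus dominated convergence with a uniform bound on $|S_t g|$. This is a cosmetic variation, not a different route.
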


\begin{proof}
Fix $f \in L^p(0, 1)$ and $\varepsilon > 0$. The density of the continuous functions on $[0, 1]$ in $L^p(0, 1)$ produce a continuous $g$ on $[0, 1]$ with $\|f - g\|_p <  \frac{\varepsilon}{2}$. Since $\|S_{t}\|_{L^p \to L^p} \leq 1$ for all $t \geq 0$ and $1 < p < \infty$ (Proposition \ref{9oijrteglkfdsfawfgrdgfds1})  we have 
\begin{align*}
\|S_{t} f - f\|_p & \leq \|S_{t} f - S_{t} g\|_p + \|S_{t} g - g\|_p + \|g - f\|_p\\
& \leq \|S_{t}\|_{L^p \to L^p} \|f - g\|_p + \|S_{t} g - g\|_p + \|g - f\|_p\\
& \leq 1 \cdot  \tfrac{\varepsilon}{2} + \|S_{t} g - g\|_p + \tfrac{\varepsilon}{2}\\
& \leq \epsilon +  \|S_{t} g - g\|_p.
\end{align*}
Since $\phi_{t}(x) \to x$ as $t \to 0^{+}$ for each $0 \leq x \leq 1$ and thus $S_{t} g \to g$ pointwise on $[0, 1]$ and 
$$|S_{t} g(x)| \leq e^{-t} \max_{0 \leq u \leq 1} |g(u)|$$
(note the use of \eqref{uuuCvv}), we can use the continuity of $g$, along with the dominated convergence theorem, to see that 
 $S_{t} g \to g$ in the norm of  $L^p(0, 1)$ as $t \to 0^{+}$. This detail and the estimate above gives us 
$$\varlimsup_{t \to 0^{+}} \|S_{t} f - f\|_p \leq \varepsilon,$$ from which the result  follows. 
\end{proof}

Also important is the quantity
$$\omega_{0} = \omega_0(\{S_t\}_{t \geq 0}) := \lim_{t \to \infty} \frac{\log \|S_t\|_{L^p \to L^p}}{t}$$
 \cite[p.~251]{MR1721989}. As a consequence of Proposition \ref{9oijrteglkfdsfawfgrdgfds1}, 
\begin{equation}\label{www000}
\omega_0 \leq \begin{cases}
-\frac{1}{q}& \mbox{if $1 < p < 2$,}\\[4pt]
-\frac{1}{p} &\mbox{if $p \geq 2$.}
\end{cases}
\end{equation}



The semigroup $\{S_t\}_{t \geq 0}$ has an  {\em infinitesimal generator} $A_p$ defined by 
$$A_p f := \lim_{t \to 0^{+}} \frac{S_{t} f - f}{t}$$ whenever this limit exists in the norm of $L^p(0, 1)$. Define $\mathcal{D}(A_p)$, the {\em domain} of $A_p$, to be the set of all $f \in L^p(0, 1)$ for which this limit exists. Known theory of strongly continuous semigroups says that $\mathcal{D}(A_p)$ is a dense linear manifold of $L^p(0, 1)$  and $A_p$ is a closed operator, meaning that $\mathcal{D}(A_p)$, endowed with the norm $\|f\|_{p} + \|A_{p} f\|_{p}$, is a Banach space \cite[Ch.~ 2, \S 10]{MR629828}. Moreover,  for appropriately smooth $f$, we have 
\begin{equation}\label{AAAaa}
(A_p f)(x) =\Big( \frac{\partial}{\partial t}\Big( \frac{\phi_{t}(x)}{x} \cdot f(\phi_{t}(x))\Big)\Big)\Big|_{t = 0} = - (1 - x)(x f(x))'
\end{equation} and hence 
\begin{equation}\label{666FFAris}
\mathcal{D}(A_p) = \{f \in L^p(0, 1): (1 - x)(x f(x))' \in L^p(0, 1)\}.
\end{equation}

 The {\em resolvent set} $\rho(A_p)$ is the collection of all $\lambda\in\mathbb{C}$ for which 
$$R(\lambda,A_p):=(\lambda I-A_p)^{-1}$$
is a bounded linear operator on $L^p(0, 1)$ in that $\lambda I - A$ is injective, $(\lambda I - A) \mathcal{D}(A_p) = L^p(0, 1)$, and $(\lambda I - A_p)$ has a bounded inverse. 
Standard semigroup theory  says that the {\em resolvent operator} $R(\lambda,A_p)$ has the Laplace transform  representation
\begin{equation}\label{LPppp}
R(\lambda,A_p)f =(\lambda I-A_p)^{-1}f =\int_{0}^{\infty}e^{-\lambda t}(S_{t}f)dt.
\end{equation}
From \eqref{www000}, this implies that whenever $\Re \lambda >\omega_{0}$, the resolvent $R(\lambda,A_p)$ is a bounded linear operator on $L^{p}(0,1)$. Since $\omega_{0}<0$ we have 
$$0\in\rho(A_p) \; \; \mbox{for all $1<p<\infty$},$$ which says that $R(0,A_p)$ is a bounded linear operator for all $1<p<\infty$.   Solving the resulting differential equation 
$-(1 - x) (x f)'(x) = g(x),$
i.e., $R(0, A_p) f = g$,  shows that 
\begin{equation}\label{CpRp}
C_p = R(0, A_p) = (-A_{p})^{-1}.
\end{equation} We summarize this discussion with the following result.

\begin{Theorem}\label{BDLp}
The Ces\`{a}ro operator  $C_p$
is bounded on $L^{p}(0, 1)$ for all $1<p<\infty$.
\end{Theorem}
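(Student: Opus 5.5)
The plan is to identify $C_p$ with the resolvent of the generator at $\lambda = 0$, via the Laplace transform formula \eqref{LPppp}, and read off boundedness from the decay of $\{S_t\}_{t\geq 0}$. First, by Proposition \ref{9oijrteglkfdsfawfgrdgfds1}, $\|S_t\|_{L^p\to L^p}\leq e^{-\alpha t}$ with $\alpha = 1/q$ when $1<p<2$ and $\alpha = 1/p$ when $p\geq 2$; in either case $\alpha>0$. Hence the Bochner integral $\int_0^\infty S_t f\,dt$ converges absolutely in $L^p(0,1)$, and it defines a bounded operator $T$ with $\|T\|\leq \int_0^\infty e^{-\alpha t}\,dt = 1/\alpha$. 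Standard semigroup theory identifies $T = R(0,A_p) = (-A_p)^{-1}$. This also yields the upper bound $\|C_p\|\leq q$ or $p$, consistent with Theorem \ref{MainT}(a).

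The remaining task is to show that $T f = C_p f$. I would verify this directly rather than through the differential equation, since that avoids domain issues.

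\begin{itemize}
\item Take $f\geq 0$, or $f$ continuous, and apply Fubini to
\[
(Tf)(x)=\int_0^\infty \frac{\phi_t(x)}{x}\, f(\phi_t(x))\,dt .
\]
\item Substitute $u=\phi_t(x)$. From $\partial_t\phi_t(x) = -\phi_t(x)(1-\phi_t(x))$, which follows by differentiating the explicit formula or from the generator computation \eqref{AAAaa}, we get $dt = -\,du/(u(1-u))$. As $t$ runs from $0$ to $\infty$, $u$ runs from $x$ down to $0$.
\item The integral therefore becomes
\[
\frac1x\int_0^x \frac{u f(u)}{u(1-u)}\,du=\frac1x\int_0^x\frac{f(u)}{1-u}\,du=(C_pf)(x).
\]
\end{itemize}

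The Fubini step is justified by Tonelli for nonnegative $f$. For general $f$, split $f$ into real and imaginary positive and negative parts; each part is in $L^p$, so $T|f|$ is finite a.e. The pointwise value of the Bochner integral agrees a.e. with the iterated integral, because evaluating against $L^q$ test functions commutes with the integral.

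The main point of care is this identification of the vector-valued integral with the pointwise formula. Once that is done, boundedness of $C_p$ follows immediately from the norm estimate above. This also confirms \eqref{CpRp} without having to check the description \eqref{666FFAris} of the domain.
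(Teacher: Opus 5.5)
Your proof is correct and follows the paper's route. The decay bound $\|S_t\|\le e^{-t/q}$ (resp.\ $e^{-t/p}$) from Proposition \ref{9oijrteglkfdsfawfgrdgfds1} makes $\int_0^\infty S_t\,dt=R(0,A_p)$ bounded with norm at most $q$ (resp.\ $p$), and this operator is then identified with $C_p$. The only variation is in that identification step: the paper solves $-(1-x)(xf)'=g$, which leans on the heuristic domain description \eqref{666FFAris}, whereas you substitute $u=\phi_t(x)$ directly in the Laplace integral (the same computation the paper performs for general $\lambda$ in proving Theorem \ref{rrrrAAp}), and your duality/Fubini argument makes that step fully rigorous.
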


Proposition \ref{9oijrteglkfdsfawfgrdgfds1}, \eqref{LPppp}, and  \eqref{CpRp} yield the estimates 
\begin{equation}\label{upperCp}
\|C_p\| \leq 
\begin{cases}
q & \mbox{if $1 < p < 2$},\\
p & \mbox{if $2 \leq p < \infty$.}
\end{cases}
\end{equation}
Corollary \ref{finallla} below will show equality in both of these cases.

\begin{Proposition}
The Ces\`{a}ro operator $C_1$  is unbounded on $L^1(0, 1)$.
\end{Proposition}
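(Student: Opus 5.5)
We will show that $C_1$ is unbounded on $L^1(0,1)$. The plan is to exhibit a family of functions in $L^1(0,1)$ of norm one whose images under $C_1$ have $L^1$ norms tending to infinity. The source of unboundedness is the factor $\frac{1}{x}$: its integral diverges logarithmically at $0$, whereas the weight $\frac{1}{1-t}\geq 1$ only helps. So we concentrate mass near $0$.

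For $0<\varepsilon<1$, take $f_\varepsilon := \frac{1}{\varepsilon}\chi_{(0,\varepsilon)}$, so that $\|f_\varepsilon\|_1 = 1$. Since $f_\varepsilon \geq 0$ and $\frac{1}{1-t}\geq 1$, for $x \geq \varepsilon$ we get
$$(C_1 f_\varepsilon)(x) \geq \frac{1}{x}\int_0^{\varepsilon} \frac{1}{\varepsilon}\,dt = \frac{1}{x}.$$
Integrating over $(\varepsilon,1)$ then gives
$$\|C_1 f_\varepsilon\|_1 \geq \int_\varepsilon^1 \frac{dx}{x} = \log\frac{1}{\varepsilon},$$
which tends to $\infty$ as $\varepsilon \to 0^+$. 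Hence no bound $\|C_1 f\|_1 \leq M\|f\|_1$ can hold, and $C_1$ is unbounded.

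An even simpler alternative shows that $C_1$ does not even map $L^1(0,1)$ into itself. Take $f \equiv 1$. Then
$$(C_1 f)(x) = -\frac{\log(1-x)}{x},$$
which is integrable, so the constant function is not a counterexample. Instead, take any nonnegative $f\in L^1$ with $\int_0^{\delta} f>0$ for some $\delta$. Then $(C_1 f)(x)\geq c/x$ for $x\geq\delta$, with $c = \int_0^{\delta} f > 0$. This is not quite enough on its own (the bound only holds on $[\delta,1)$, where $1/x$ is integrable), so the family argument above is the one to use.

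There is no real obstacle here. The only point to check is the elementary lower bound on $(C_1 f_\varepsilon)(x)$ for $x\geq\varepsilon$, which uses nothing beyond positivity of the integrand.
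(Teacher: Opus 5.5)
Your proof is correct and uses the same test functions as the paper ($f_n = n\chi_{[0,1/n]}$, i.e.\ $\varepsilon = 1/n$). The paper computes $\|C_1 f_n\|_1$ exactly and shows it is asymptotic to $\log n$, whereas your pointwise bound $(C_1 f_\varepsilon)(x) \geq 1/x$ on $[\varepsilon,1)$ reaches the needed lower bound $\log(1/\varepsilon)$ more quickly; the second ``alternative'' paragraph establishes nothing (as you yourself note) and should simply be deleted.
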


\begin{proof}
Consider the sequence of  functions $(f_n)_{n \geq 1}$ on $[0, 1]$ defined by 
\begin{equation}\label{ffnnnn}
f_n(x)=\begin{cases} 
n & \mbox{if $0\leq x\leq \frac{1}{n}$},\\
0 & \mbox{if $\frac{1}{n} <   x\leq 1$},
\end{cases} 
\end{equation}
and observe that $\|f_{n}\|_{1} = 1$. A calculation shows that 
$$(C_1 f_n)(x) = 
\begin{cases}
{\displaystyle  \frac{n}{x}  \log \frac{1}{1 - x}} & \mbox{if $0 \leq x \leq \frac{1}{n}$,}\\[10pt]
{\displaystyle \frac{n}{x} \log \frac{n}{n - 1}} & \mbox{if $\frac{1}{n} < x \leq 1$}
\end{cases}
$$
and so 
\begin{align*}
\|C_1 f_n\|_{1} & =  \int_{0}^{1} |(C_1 f_n)(x)| dx\\
 & = n \int_{0}^{1/n} \frac{1}{x} \log \frac{1}{1 - x} dx + n \log \frac{n}{n - 1} \int_{1/n}^{1} \frac{1}{x} dx\\
& = n \sum_{k = 1}^{\infty} \frac{1}{n^k k^2} + (n \log n) \log \frac{n}{n - 1}\\
& = \sum_{k = 1}^{\infty} \frac{1}{n^{k - 1} k^2} + \big(\frac{n}{n - 1} \log n\big) (n - 1) \log \Big(1 + \frac{1}{n - 1}\Big)\\
& =  \sum_{k = 1}^{\infty} \frac{1}{n^{k - 1} k^2} + \log n \cdot ( \frac{n}{n - 1})  \cdot  \log \Big(1 + \frac{1}{n - 1}\Big)^{n - 1}.
\end{align*}
Since
$$\lim_{n \to \infty} \frac{n}{n - 1}  \log \Big(1 + \frac{1}{n - 1}\Big)^{n - 1} = \log e = 1$$
and the sum
$$ \sum_{k = 1}^{\infty} \frac{1}{n^{k - 1} k^2}$$ is uniformly bounded above in $n$, we have 
$\|C_1 f_{n}\|_{1} \sim \log n.$
Thus,  $C_1$ is unbounded  on $L^1(0, 1)$.
\end{proof}

\section{Point spectrum} 

We now discuss the eigenvalues of  the Ces\`{a}ro operator $C_p$. 

\begin{Proposition} \label{crescenty}
The point spectrum of $C_p$ is empty when  $p \geq  2$ and is the open crescent domain 
$D(\frac{q}{2}, \frac{q}{2}) \setminus \overline{D(\frac{p}{2}, \frac{p}{2})}$ when $1 < p < 2$.
\end{Proposition}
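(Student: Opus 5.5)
Since $C_p=(-A_p)^{-1}$ by \eqref{CpRp}, I will avoid working with $A_p$ and directly solve the eigenvalue equation $C_p f=\lambda f$. Note first that $\lambda=0$ is never an eigenvalue, because $C_p$ is injective. Suppose $\lambda\neq 0$ and $f\in L^p(0,1)$ satisfies $C_pf=\lambda f$. Then
\[
\int_0^x \frac{f(t)}{1-t}\,dt=\lambda x f(x).
\]
The left-hand side is absolutely continuous, so $F(x):=xf(x)$ is absolutely continuous on $(0,1)$, and differentiating gives
\[
\lambda F'(x)=\frac{F(x)}{x(1-x)}.
\]
Solving this ODE, every eigenfunction must be a constant multiple of
\[
f_\lambda(x)=x^{\frac1\lambda-1}(1-x)^{-\frac1\lambda}.
\]
Conversely, I will check that whenever $f_\lambda\in L^p$ it really is an eigenfunction. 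Since $F(x)=x^{1/\lambda}(1-x)^{-1/\lambda}$, the condition $\Re(1/\lambda)>0$ gives $F(x)\to 0$ as $x\to 0^+$, and the integral identity then holds. This condition is implied by integrability near $0$, as shown below.

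The problem is therefore reduced to deciding exactly when $f_\lambda\in L^p(0,1)$. Write $w=1/\lambda=a+ib$. Then
\[
|f_\lambda(x)|^p=x^{p(a-1)}(1-x)^{-pa}.
\]
Integrability near $0$ requires $p(a-1)>-1$, that is, $a>1-\tfrac1p=\tfrac1q$. Integrability near $1$ requires $pa<1$, that is, $a<\tfrac1p$. So $f_\lambda\in L^p$ if and only if
\[
\tfrac1q<\Re\tfrac1\lambda<\tfrac1p.
\]
When $p\geq 2$ we have $\tfrac1q\geq\tfrac1p$, so this strip is empty and $\sigma_p(C_p)=\varnothing$. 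When $1<p<2$, the strip is nonempty, and $\lambda$ is an eigenvalue exactly when $\tfrac1q<\Re(1/\lambda)<\tfrac1p$. In that case $a>0$, which gives the boundary behavior of $F$ needed above.

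The remaining step is geometric: translate the strip under $\lambda\mapsto 1/\lambda$. For $c>0$, the half-plane $\{\Re(1/\lambda)>c\}$ is the open disc $D(\tfrac1{2c},\tfrac1{2c})$. This follows from
\[
\Re\tfrac1\lambda=\frac{\Re\lambda}{|\lambda|^2}>c \iff |\lambda|^2<\tfrac1c\Re\lambda \iff \bigl|\lambda-\tfrac1{2c}\bigr|<\tfrac1{2c}.
\]
Thus $\Re(1/\lambda)>\tfrac1q$ corresponds to $D(\tfrac q2,\tfrac q2)$. Likewise $\Re(1/\lambda)<\tfrac1p$, for $\lambda\neq0$, corresponds to the complement of $\overline{D(\tfrac p2,\tfrac p2)}$. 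Together these give the stated crescent $D(\tfrac q2,\tfrac q2)\setminus\overline{D(\tfrac p2,\tfrac p2)}$.

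The main obstacle I expect is rigor in the first step. There I must justify that an $L^p$ eigenfunction yields an absolutely continuous $F$ satisfying the ODE almost everywhere, so that uniqueness up to constants holds. This follows from the integral equation, since its left side is locally absolutely continuous on $(0,1)$ and dividing by $\lambda x$ is harmless there. The rest is routine computation.
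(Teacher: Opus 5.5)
Your proposal is correct and follows the same route as the paper. Both solve the eigenvalue equation as a first-order ODE to get $f_\lambda(x)=x^{1/\lambda-1}(1-x)^{-1/\lambda}$, show $f_\lambda\in L^p(0,1)$ exactly when $\frac{1}{q}<\Re \frac{1}{\lambda}<\frac{1}{p}$, and map this strip under $\lambda\mapsto 1/\lambda$; you also supply details the paper leaves implicit (excluding $\lambda=0$, the local absolute continuity behind the ODE, the converse check via $F(0^+)=0$, and the explicit half-plane-to-disc computation).
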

 
 \begin{proof}
The eigenvalue equation $C_p f = \lambda f$  yields the integral equation 
$$\frac{1}{x} \int_{0}^{x} \frac{f(t)}{1 - t} dt = \lambda f(x)$$
which can be converted into a differential equation that can be solved in the standard way as 
$$f(x) = c \frac{x^{\frac{1}{\lambda} - 1}}{(1 - x)^{\frac{1}{\lambda}}}, \quad c \in \C.$$
When $c \not = 0$, the above $f$ belongs to $L^{p}(0, 1)$ for $p \geq 2$ only when 
 $$\frac{1}{q} < \Re \frac{1}{\lambda} < \frac{1}{p}.$$
i.e., for {\em no} $\lambda$ (since $p \geq 2$ implies $p^{-1} \leq q^{-1}$). Thus, in this case, $C_p$  has no eigenvalues. A similar  argument works when $1 < p < 2$ but  the above function  $f$ belongs to $L^p(0, 1)$ only when 
$$\frac{1}{q} < \Re \frac{1}{\lambda} < \frac{1}{p},$$
which is 
a nonempty vertical strip (since $1 < p < 2$ implies $q^{-1} < p^{-1}$). The function $z \mapsto z^{-1}$ maps this strip onto the  desired crescent domain.
\end{proof}

Eventually we will need the point spectrum of the infinitesimal generator  $A_p$. The proof is similar to that of the previous proposition. 

\begin{Proposition}\label{cc8**}
The point spectrum  of $A_p$ is  empty when $p \geq 2$ and is the vertical strip $\{\lambda: -\frac{1}{p} < \Re \lambda < -\frac{1}{q}\}$ when $1 < p < 2$.
\end{Proposition}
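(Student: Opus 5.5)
We will solve the eigenvalue equation $A_p f = \lambda f$ directly, using the formula \eqref{AAAaa} for the generator and the description \eqref{666FFAris} of its domain. An eigenvector $f \in \mathcal{D}(A_p)$ satisfies
$$-(1-x)(xf(x))' = \lambda f(x), \qquad 0<x<1,$$
in the distributional (absolutely continuous) sense. Setting $g(x) = x f(x)$, this becomes the first-order linear ODE
$$g'(x) = -\frac{\lambda}{x(1-x)}\, g(x).$$
Using $\frac{1}{x(1-x)} = \frac1x + \frac{1}{1-x}$ and integrating gives $g(x) = c\, x^{-\lambda}(1-x)^{\lambda}$. Hence every solution has the form
$$f(x) = c\,x^{-\lambda-1}(1-x)^{\lambda}, \qquad c\in\C.$$
We must justify that any $f\in\mathcal{D}(A_p)$ with $(1-x)(xf)'\in L^p$ has $xf$ locally absolutely continuous on $(0,1)$. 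Granting this, uniqueness for the linear ODE on compact subintervals shows these are all the solutions.

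Next we find when $f \in L^p(0,1)$ with $c \neq 0$. Near $0$ we have $|f(x)|\asymp x^{-\Re\lambda-1}$, so integrability of $|f|^p$ requires $p(-\Re\lambda-1)>-1$, i.e.\ $\Re\lambda<-1+\frac1p=-\frac1q$. Near $1$ we have $|f(x)|\asymp(1-x)^{\Re\lambda}$, so we need $p\Re\lambda>-1$, i.e.\ $\Re\lambda>-\frac1p$. Thus $f\in L^p$ exactly when $-\frac1p<\Re\lambda<-\frac1q$. This strip is empty when $p\ge2$, since then $\frac1q\ge\frac1p$, and nonempty when $1<p<2$.

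For $\lambda$ in the strip we also check that $f\in\mathcal{D}(A_p)$. This holds because $-(1-x)(xf)'=\lambda f\in L^p$, and \eqref{666FFAris} characterizes the domain. This gives the strip as the point spectrum.

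Alternatively, the statement can be obtained from Proposition \ref{crescenty} via \eqref{CpRp}. Since $C_p=(-A_p)^{-1}$, we have $A_pf=\lambda f$ with $\lambda\neq0$ if and only if $C_pf=-\lambda^{-1}f$; moreover $0\in\rho(A_p)$. Mapping the crescent under $\mu\mapsto-1/\mu$ gives $\Re\mu^{-1}\in(\frac1q,\frac1p)$, that is $\Re\lambda\in(-\frac1p,-\frac1q)$, matching the strip. We would present this route as a check.

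The main obstacle is the rigor of identifying $\mathcal{D}(A_p)$ with the differential description \eqref{666FFAris}, which the paper states only for smooth $f$. The correspondence route through $C_p=(-A_p)^{-1}$ avoids this subtlety entirely, so we would adopt it as the actual proof if needed.
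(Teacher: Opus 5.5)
Your first argument is the one the paper intends. The paper's proof says only that it is similar to the proof of Proposition~\ref{crescenty}: solve the eigenvalue ODE to get $f=c\,x^{-\lambda-1}(1-x)^{\lambda}$, then test $L^p$-integrability at $0$ and at $1$. Your exponents and the resulting open strip are correct.

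Your second argument is a genuinely different route. It transports Proposition~\ref{crescenty} through $C_p=(-A_p)^{-1}$ via $\lambda\mapsto -1/\lambda$, with $0\in\rho(A_p)$ excluding $\lambda=0$. Its advantage is that it uses nothing about $\mathcal{D}(A_p)$ beyond $\mathcal{D}(A_p)=C_p(L^p(0,1))$. The regularity needed to differentiate also comes for free from the integral equation $C_pf=\mu f$.

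That same identity removes the obstacle you flagged in the first route, in both directions:
\begin{itemize}
\item If $f\in\mathcal{D}(A_p)$, then $f=C_pk$ with $k=-A_pf$. So $xf(x)=\int_0^x k(t)(1-t)^{-1}\,dt$ is locally absolutely continuous, and $A_pf=-k=-(1-x)(xf)'$ a.e.
\item Conversely, suppose $xf$ is locally absolutely continuous and $k:=(1-x)(xf)'\in L^p(0,1)$. Then $xf-xC_pk$ has derivative zero a.e., so $f-C_pk=K/x$ for some constant $K$. Since $f\in L^p(0,1)$, this forces $K=0$, so $f\in\mathcal{D}(A_p)$.
\end{itemize}
Hence \eqref{666FFAris} is fully justified, and either of your arguments gives a complete proof.
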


\section{Spectrum} 

Our analysis of the spectrum of $C_{p}$ involves the spectrum of the infinitesimal generator $A_p$ of the semigroup $\{S_t\}_{t \geq 0}$.  Recall from \eqref{AAAaa} that
$$(A_p f)(x)=-(1-x)(xf(x))'$$
We now compute the resolvent operator $R(\lambda, A_p)$ and prove the resolvent set $\rho(A_p)$ contains the union of two open half planes.  
 
\begin{Theorem}\label{rrrrAAp}
\hfill
\begin{enumerate}
\item For each $1 < p < 2$ we have the following.
\begin{enumerate}
\item Whenever $\Re \lambda> -\frac{1}{q}$,
\begin{equation}\label{aone}
 R(\lambda,A_p)f(x)=(\lambda I-A_p)^{-1}f(x)=\frac{(1-x)^{\lambda}}{x^{\lambda+1}}\int_{0}^{x}\frac{t^{\lambda}f(t)dt}{(1-t)^{\lambda+1}}.
 \end{equation}
\item Whenever $\Re \lambda<-\frac{1}{p}$,
\begin{equation}\label{atwo}
 R(\lambda,A_p)f(x)=(\lambda I-A_p)^{-1}f(x)=-\frac{(1-x)^{\lambda}}{x^{\lambda+1}}\int_{x}^{1}\frac{t^{\lambda}f(t)dt}{(1-t)^{\lambda+1}}.
 \end{equation}
  \end{enumerate}
 Moreover, $R(\lambda,A_p)$ is a bounded linear operator on $L^{p}(0,1)$ whenever 
$$ \Re \lambda > -\tfrac{1}{q} \; \mbox{or} \; \Re \lambda < -\tfrac{1}{p}.$$
\item For each  $p \geq 2$ we have the following.
\begin{enumerate}
\item Whenever $\Re \lambda > -\frac{1}{p}$, 
\begin{equation}\label{bone}
 R(\lambda,A_p)f(x)=(\lambda I-A_p)^{-1}f(x)=\frac{(1-x)^{\lambda}}{x^{\lambda+1}}\int_{0}^{x}\frac{t^{\lambda}f(t)dt}{(1-t)^{\lambda+1}}.
 \end{equation}
 \item Whenever $\Re \lambda < -\frac{1}{q}$, 
 \begin{equation}\label{btwo}
  R(\lambda,A_p)f(x)=(\lambda I-A_p)^{-1}f(x)=-\frac{(1-x)^{\lambda}}{x^{\lambda+1}}\int_{x}^{1}\frac{t^{\lambda}f(t)dt}{(1-t)^{\lambda+1}}.
  \end{equation}
\end{enumerate}
   Moreover, $R(\lambda,A_p)$ is a bounded linear operator on $L^{p}(0,1)$ whenever 
$$\Re \lambda > -\tfrac{1}{p} \; \mbox{or} \; \Re \lambda < -\tfrac{1}{q}.$$
\end{enumerate}
\end{Theorem}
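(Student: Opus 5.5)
The plan has three steps: solve the resolvent equation explicitly, bound the resulting integral operators on $L^p$ by a Schur/Hardy-type test, and then check that these bounded operators really are two-sided inverses of $\lambda I - A_p$ on $\mathcal{D}(A_p)$.

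\emph{Step 1: solving the equation.} For $g$ in the range we want $f$ with $\lambda f + (1-x)(xf)' = g$. Put $h = xf$, so that $h' + \frac{\lambda}{x(1-x)}h = \frac{g}{1-x}$. An integrating factor is $\big(\frac{x}{1-x}\big)^{\lambda}$. Integrating either from $0$ or from $1$ produces exactly the two formulas \eqref{aone} (equivalently \eqref{bone}) and \eqref{atwo} (equivalently \eqref{btwo}). The homogeneous solutions are $\frac{(1-x)^{\lambda}}{x^{\lambda+1}}$, which are the eigenfunctions behind Proposition \ref{cc8**}. The choice between the two formulas is therefore dictated by which one stays in $L^p$.

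\emph{Step 2: boundedness.} Write the operator as
\[
(T_\lambda f)(x) = \int_0^1 K(x,t) f(t)\,dt, \qquad K(x,t) = \frac{1}{x}\Big(\frac{t}{x}\Big)^{\lambda}\Big(\frac{1-x}{1-t}\Big)^{\lambda}\frac{1}{1-t}\,\chi_{t<x}
\]
(respectively $\chi_{t>x}$ with a minus sign), and let $a = \Re\lambda$. The most economical route is to imitate Proposition \ref{9oijrteglkfdsfawfgrdgfds1} and \eqref{LPppp}. For $\Re\lambda > \omega_0$ the Laplace transform $\int_0^\infty e^{-\lambda t} S_t f\,dt$ converges, and a direct substitution $u = \phi_t(x)$ should turn it into \eqref{aone}. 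The bound $\|S_t\| \leq e^{-t/q}$ (for $p<2$) or $e^{-t/p}$ (for $p\geq 2$) then gives $\|R(\lambda,A_p)\| \leq (a + 1/q)^{-1}$, respectively $(a+1/p)^{-1}$, on exactly the right half-planes in the statement.

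For the left half-planes I would use the group $\{S_t\}_{t\in\mathbb{R}}$: $\phi_{-t}$ is still a bijection, and the computation in \eqref{fffppp} holds verbatim for $t<0$. For $t \geq 0$ it gives
\[
\|S_{-t}f\|_p^p = e^{t(p-1)}\int_0^1 |f(u)|^p\,\big((e^{-t}-1)u+1\big)^{p-2}\,du.
\]
Here $(e^{-t}-1)u+1$ lies in $[e^{-t},1]$. So for $p \geq 2$ we get $\|S_{-t}\| \leq e^{t/q}$, and for $p<2$ we get $\|S_{-t}\| \leq e^{t(p-1)/p}\,e^{t(2-p)/p} = e^{t/p}$. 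Hence $-A_p$ generates the semigroup $\{S_{-t}\}_{t\geq 0}$, and
\[
R(\lambda,A_p) = -\int_0^\infty e^{\lambda t} S_{-t}\,dt.
\]
This converges in norm for $\Re\lambda < -1/p$ (case $p<2$) and for $\Re\lambda < -1/q$ (case $p\geq 2$). Substituting $u=\phi_{-t}(x)$ should produce \eqref{atwo} and \eqref{btwo} with the minus sign.

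\emph{Step 3: identification.} I expect this to be the main obstacle. One must check that $S_{-t}$ is genuinely a strongly continuous group and that its generator is exactly $A_p$ with the domain \eqref{666FFAris}. Strong continuity at $0^-$ follows as in the earlier proposition, using the bound above on a bounded $t$-range. The generator of $\{S_{-t}\}_{t\geq 0}$ is $-A_p$ by the standard fact that a $C_0$-group restricted to negative times is generated by the negative of the generator.

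Granting this, the standard Hille–Yosida/Laplace representation \eqref{LPppp} applied to both semigroups gives invertibility of $\lambda I - A_p$ together with the norm bounds, so the resolvent set contains both half-planes. The explicit formulas then follow from the change of variables, and are cross-checked by the ODE derivation in Step 1, which fixes which integration endpoint is correct.
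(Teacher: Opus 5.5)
Your proof is correct. For the right half-planes it is the same as the paper's argument: the Laplace transform of $\{S_t\}_{t\ge0}$ followed by the substitution $u=\phi_t(x)$. For the left half-planes you take a different route.

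\textbf{The paper's route.} It obtains the formula for $\operatorname{Re}\lambda<-\frac1p$ (resp. $-\frac1q$) by integrating the ODE from $x=1$. It then proves boundedness by writing $R(\lambda,A_p)=-\mathcal{F}I_{\lambda+1}\mathcal{F}$, where $(\mathcal{F}f)(x)=f(1-x)$ and $I_\mu=\int_0^\infty e^{\mu t}S_t\,dt$. Finally it estimates $I_{\lambda+1}$ using only the forward bounds of Proposition~2.1.

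\textbf{Your route.} You pass to the backward semigroup $\{S_{-t}\}_{t\ge0}$, which is generated by $-A_p$. You then use $R(\lambda,A_p)=-R(-\lambda,-A_p)=-\int_0^\infty e^{\lambda t}S_{-t}\,dt$.

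\textbf{Why they agree.} The two are the same computation in disguise. A direct check gives $1-\phi_t(1-x)=\phi_{-t}(x)$, and hence $\mathcal{F}S_t\mathcal{F}=e^{-t}S_{-t}$. Consequently $\mathcal{F}I_{\lambda+1}\mathcal{F}=\int_0^\infty e^{\lambda t}S_{-t}\,dt$ and $\|S_{-t}\|=e^{t}\|S_t\|$. This is exactly your bound: $e^{t/p}$ for $p<2$ and $e^{t/q}$ for $p\ge2$.

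\textbf{What each buys.} The flip lets the paper work only with the forward semigroup and estimates it has already proved. It never needs strong continuity of $S_{-t}$ or any group theory. Your route gives $\lambda\in\rho(A_p)$ directly from the Laplace representation for the generator $-A_p$. So you certify that $\lambda I-A_p$ is a bijection of $\mathcal{D}(A_p)$ onto $L^p(0,1)$. The paper instead has a bounded candidate inverse whose identification with $(\lambda I-A_p)^{-1}$ rests on the ODE, an asserted boundary condition at $x=1$, and the informal description of $\mathcal{D}(A_p)$.

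\textbf{Two minor remarks.} In Step~3 you do not actually need that explicit domain description. For a $C_0$-group, the backward half is automatically generated by $-A_p$ on the same domain, and the paper itself uses exactly this splitting later, in the $p>2$ spectral section. Also, the kernel $K(x,t)$ and the Schur-test idea in Step~2 are never used and can be dropped.
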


\begin{proof}
From \eqref{www000}, along with the discussion preceding it, we know  that $R(\lambda, A_p)$ is bounded on $L^p(0, 1)$ whenever $\Re \lambda>-\frac{1}{q}$ (when $1 < p < 2$) or whenever $\Re \lambda > -\frac{1}{p}$ (when $p \geq 2)$.
Moreover, in both cases, we use the Laplace transform identity in  \eqref{LPppp} to see that 
\begin{align*}
R(\lambda,A_p)f(x) & =\int_{0}^{\infty}e^{-t\lambda}S_{t}f(x)dt\\
& =\int_{0}^{\infty}(e^{-t})^{\lambda}\frac{e^{-t}}{(e^{-t}-1)x+1}f\bigg(\frac{e^{-t}x}{(e^{-t}-1)x+1}\bigg)dt.
\end{align*}
With the substitution 
$$u=\frac{e^{-t}x}{(e^{-t}-1)x+1},$$ 
and checking
$$e^{-t} = \frac{u (1 - x)}{x (1 - u)} \;  \; \mbox{and} \; \; dt = - \frac{1}{u (1 - u)} du,$$
the above integral transforms into
$$
R(\lambda,A_p)f(x) = 
 \frac{(1-x)^{\lambda}}{x^{\lambda+1}}\int_{0}^{x}\frac{u^{\lambda}f(u)du}{(1-u)^{\lambda+1}},
$$
which proves \eqref{aone} and \eqref{bone}.

On the other hand, a formula for $R(\lambda,A_p)$ is found by solving
$$(\lambda I-A_p)f(x)=\lambda f(x)+(1-x)(xf(x))'=g(x)$$
for $f$ in terms of $g$. The integrating factor
$$\mu(x):=\frac{x^{\lambda +1}}{(1-x)^{\lambda}}$$
transforms this differential equation into 
$$(\mu(x)f(x))'=\Big(\frac{x^{\lambda +1}}{(1-x)^{\lambda}}f(x)\Big)'=\frac{x^{\lambda}}{(1-x)^{\lambda +1}}g(x).$$
Whenever $1 < p < 2$ and $\Re \lambda <-\frac{1}{p}$, observe that
$$\lim_{x \to 1^{-}} \frac{x^{\lambda +1}}{(1-x)^{\lambda}}f(x) = 0$$ and, for each $0 < x < 1$, the integral 
$$\int_{x}^{1}\frac{t^{\lambda}}{(1-t)^{\lambda +1}}g(t)dt$$
converges.
Thus,
$$(\lambda I-A_p)^{-1}f(x)=-\frac{(1-x)^{\lambda}}{x^{\lambda+1}}\int_{x}^{1}\frac{t^{\lambda}f(t)dt}{(1-t)^{\lambda+1}},
$$
which proves \eqref{atwo}. A similar argument verifies \eqref{btwo} when $p \geq 2$ and $\Re \lambda < -\frac{1}{q}$.

It remains to prove $R(\lambda,A_p)$ is bounded on $L^{p}(0,1)$ whenever $\Re \lambda <-\frac{1}{p}$ (for $1 < p < 2$) or whenever $\Re \lambda < -\frac{1}{q}$ (for $p \geq 2$). To this end, let 
$$I_{\mu} f:=\int_{0}^{\infty}e^{\mu t}S_{t}fdt.$$
Formally substituting 
$$u=\frac{e^{-t}x}{(e^{-t}-1)x+1}$$
in the above integral and 
and checking that 
$$e^{t} = \frac{x(1 - u)}{u (1 - x)} \;  \; \mbox{and} \; \; dt = - \frac{1}{u (1 - u)} du,$$
 we   have 
\begin{align*}
I_{\mu}f(x)& =\int_{0}^{x}(e^{t})^{\mu}\frac{u}{x}\frac{f(u)}{u(1-u)}du\\ & =\int_{0}^{x}\bigg(\frac{x(1-u)}{u(1-x)}\bigg)^{\mu}\frac{1}{x}\frac{f(u)}{1-u}du\\
&=\frac{x^{\mu-1}}{(1-x)^{\mu}}\int_{0}^{x}\frac{(1-u)^{\mu-1}}{u^{\mu}}f(u)du.
\end{align*}
Now consider the formula 
$$R(\lambda,A_p)f(x)=(\lambda I-A_p)^{-1}f(x)=-\frac{(1-x)^{\lambda}}{x^{\lambda+1}}\int_{x}^{1}\frac{t^{\lambda}f(t)dt}{(1-t)^{\lambda+1}},$$
from (b). 
Make the substitution $u=1-t$ to get 
$$R(\lambda,A_p)f(x)=-\frac{(1-x)^{\lambda}}{x^{\lambda+1}}\int_{0}^{1-x}\frac{(1-u)^{\lambda}}{u^{\lambda+1}}f(1-u)du.$$
The  ``flip operator''
$(\mathcal{F}f)(x):=f(1-x)$
is isometric on $L^p(0, 1)$ and 
$$R(\lambda,A_p)=-\mathcal{F}\circ I_{\lambda+1}\circ\mathcal{F}.$$
Thus  on $L^p(0, 1)$, $R(\lambda,A_p)$ is bounded if and only if  $I_{\lambda+1}$ is bounded. We also have 
\begin{align*}
\| I_{\lambda+1}f\|_{L^p \to L^p}  & \leq\bigg(\int_{0}^{\infty}e^{(\Re (\lambda)+1)t}\| S_{t}f\|_{p}dt\bigg)\\
 & \leq\bigg(\int_{0}^{\infty}e^{(\Re(\lambda)+1-\frac{1}{q})t}dt\bigg)  \|f\|_p
 \end{align*}
(note the use of Proposition \ref{9oijrteglkfdsfawfgrdgfds1}(a)). Hence, since  
$$\int_{0}^{\infty}e^{(\Re(\lambda)+1-\frac{1}{q})t}dt<\infty$$ whenever 
$$\Re \lambda <-1+\tfrac{1}{q}=-(1-\tfrac{1}{q})=-\tfrac{1}{p},$$ $I_{\lambda+1}$ is bounded on $L^{p}(0, 1)$ whenever $1 < p < 2$ and $\Re \lambda<-\frac{1}{p}$. Conclusion: $R(\lambda,A_p)$ is bounded on $L^{p}(0, 1)$.

In a similar way, 
\begin{align*}
\| I_{\lambda+1}f\|_{L^p \to L^p}  & \leq\bigg(\int_{0}^{\infty}e^{(\Re (\lambda)+1)t}\| S_{t}f\|_{p}dt\bigg)\\
 & \leq\bigg(\int_{0}^{\infty}e^{(\Re(\lambda)+1-\frac{1}{p})t}dt\bigg)  \|f\|_p,
 \end{align*}
 (note the use of Proposition \ref{9oijrteglkfdsfawfgrdgfds1}(b)) 
 and since 
$$\int_{0}^{\infty}e^{(\Re(\lambda)+1-\frac{1}{p})t}dt<\infty$$ whenever 
$$\Re \lambda <-1+\tfrac{1}{p}=-(1-\tfrac{1}{p})=-\tfrac{1}{q},$$  $I_{\lambda+1}$ is bounded on $L^{p}(0, 1)$ whenever $p \geq 2$ and $\Re \lambda<-\frac{1}{q}$. As before, this implies $R(\lambda,A_p)$ is bounded on $L^{p}(0, 1)$. 
\end{proof}



\begin{Corollary}\label{spectrums1}
When $1 < p < 2$, 
$$\sigma(C_p) = \overline{D(\tfrac{q}{2}, \tfrac{q}{2})} \setminus D(\tfrac{p}{2}, \tfrac{p}{2}).$$
\end{Corollary}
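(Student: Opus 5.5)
We compute $\sigma(C_p)$ from $\sigma(A_p)$ using $C_p = (-A_p)^{-1}$ from \eqref{CpRp}. Since $0\in\rho(A_p)$, the spectral mapping theorem for the inverse of a closed operator gives
$$\sigma(C_p)\setminus\{0\} = \{-1/\mu : \mu \in \sigma(A_p)\}.$$
Concretely, for $\lambda\neq 0$ we have
$$\lambda I - C_p = -\lambda\, (-\tfrac{1}{\lambda} I - A_p)\, C_p$$
on $L^p(0,1)$. Because $C_p$ maps $L^p$ bijectively onto $\mathcal{D}(A_p)$, $\lambda I-C_p$ is invertible exactly when $-1/\lambda\in\rho(A_p)$. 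Moreover $0\in\sigma(C_p)$, since $C_p$ is injective with range $\mathcal{D}(A_p)$. This range is dense but not all of $L^p$, because $A_p$ is unbounded: it is a first-order differential operator. So $C_p$ is not invertible. Hence it suffices to show that, for $1<p<2$,
$$\sigma(A_p) = \{\mu : -\tfrac1p \leq \Re\mu \leq -\tfrac1q\}.$$

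\textbf{Inclusion $\subseteq$.} By Theorem \ref{rrrrAAp}(a), $\rho(A_p)$ contains both half planes $\Re\mu>-\frac1q$ and $\Re\mu<-\frac1p$.

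\textbf{Inclusion $\supseteq$.} By Proposition \ref{cc8**}, the open strip $-\frac1p<\Re\mu<-\frac1q$ lies in $\sigma_p(A_p)$. The spectrum is closed, so it contains the closed strip.

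\textbf{Transfer to $C_p$.} The map $\mu\mapsto -1/\mu$ sends the closed strip onto the set of $\lambda$ with $\frac1q\le \Re\frac1\lambda\le\frac1p$, excluding $\lambda=0$. Write $w=1/\lambda$. The condition $\Re w<\frac1p$ is equivalent to $\lambda$ lying outside $\overline{D(\frac p2,\frac p2)}$, because the line $\Re w = 1/p$ corresponds under inversion to the circle through $0$ and $p$ centered at $p/2$. Likewise $\Re w>\frac1q$ corresponds to $\lambda\in D(\frac q2,\frac q2)$. Thus the image of the closed strip is
$$\Big(\overline{D(\tfrac{q}{2},\tfrac{q}{2})}\setminus D(\tfrac{p}{2},\tfrac{p}{2})\Big)\setminus\{0\},$$
and adding the point $0$ gives the stated set.

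\textbf{Main obstacle.} The delicate step is the rigorous spectral mapping between $C_p$ and $A_p$ for an unbounded generator, in particular the handling of the point $0$. I will carry this out via the factorization above, checking domains carefully. An alternative is to note that $0$ is in the closure of the crescent, since the open crescent accumulates at $0$, so closedness of $\sigma(C_p)$ already forces $0\in\sigma(C_p)$. This sidesteps any argument about the range of $C_p$, and I would use it as the cleaner route.
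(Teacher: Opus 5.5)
Your proposal is correct and follows essentially the same route as the paper. You identify $\sigma(A_p)$ as the closed strip $\{-\tfrac1p\le\Re\mu\le-\tfrac1q\}$ using Theorem \ref{rrrrAAp}, Proposition \ref{cc8**} and closedness of the spectrum, and then transfer to $C_p=(-A_p)^{-1}$ via $\mu\mapsto-1/\mu$. The only difference is that you spell out the inversion geometry and the handling of the point $0$, which the paper leaves to the cited spectral mapping theorem. There is one harmless sign slip: the factorization should read $\lambda I-C_p=\lambda\bigl(-\tfrac{1}{\lambda}I-A_p\bigr)C_p$, and this does not affect the invertibility argument.
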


\begin{proof}
Note that  
$\sigma(A_p)\subseteq\{\lambda:-\tfrac{1}{p}\leq\Re \lambda \leq -\tfrac{1}{q}\}$ (Theorem \ref{rrrrAAp})
and 
$\sigma_{p}(A_p)=\{\lambda:-\tfrac{1}{p}<\Re \lambda <-\tfrac{1}{q}\}$ (Proposition \ref{cc8**})
and thus 
$$\{\lambda: -\tfrac{1}{p}<\Re \lambda <-\tfrac{1}{q}\} \subset \sigma(A_p) \subset \{\lambda:-\tfrac{1}{p} \leq \Re \lambda \leq-\tfrac{1}{q}\}.$$
Since $A_p$ is a closed operator and thus $\sigma(A_p)$ is a closed set  \cite[p.~240]{MR1721989}, taking closures above  gives us
$$\sigma(A_p)=\{\lambda:-\tfrac{1}{p}\leq\Re \lambda \leq -\tfrac{1}{q}\}.$$
The identity $C_p=(-A_p)^{-1}$ from \eqref{CpRp} and the spectral mapping theorem for the resolvent \cite[p.~243]{MR1721989}, via $z \mapsto - z^{-1}$, yields the result. 
\end{proof}

We now discuss the spectrum of $C_p$ when $p > 2$. As always, our path is through the infinitesimal generator $A_p$ for the semigroup  $\{S_t\}_{t \geq 0}$. 

The {\em group} of operators $\{S_t\}_{-\infty < t < \infty}$, 
$$
(S_t f)(x) = \frac{\varphi_t(x)}{x} f(\varphi_t(x)), \quad \varphi_t(x) = \frac{e^{-t}x}{(e^{-t}-1)x+1}, \quad -\infty < t < \infty,
$$
(recall the discussion of $\phi_{t}$ for $t < 0$ from \S \ref{sec2})
is strongly continuous on $L^p(0,1)$ in that 
$$\lim_{t \to 0} \|S_{t} f - f\|_{p} = 0, \quad f \in L^p(0, 1),$$
and has infinitesimal generator 
$$
(A_p f) (x)=-(1-x)(xf(x))', 
$$
with domain 
$
\mathcal{D}(A_p)= \left\{ f \in L^p(0,1): \, (1-x)(xf(x))' \in L^p(0,1) \right\}. 
$

The group $\{S_t\}_{-\infty < t < \infty}$ can be split into the two semigroups $\{S_t^+\}_{t\geq 0}$ and  $\{S_t^-\}_{t\geq 0}$  defined by 
$$
(S_t^+ f) (x) = \frac{\varphi_t(x)}{x} f(\varphi_t(x))=\frac{e^{-t} }{(e^{-t}-1)x+1} f\left( \frac{e^{-t} x}{(e^{-t}-1)x+1} \right),
 \quad t \geq 0, 
$$ 
and 
$$
(S_t^- f) (x) = \frac{\varphi_{-t}(x)}{x} f(\varphi_{-t}(x)) = 
\frac{e^t }{(e^{t}-1)x+1} f\left( \frac{e^t x}{(e^{t}-1)x+1} \right), \quad t \geq 0.
$$

Both are strongly continuous on $L^p(0,1)$. The infinitesimal generator of $\{S_t^+\}_{t \geq 0}$ is
$$
(A_{p}^+ f) (x) = -(1-x)(xf(x))'=(A_{p} f)(x), 
$$
while the infinitesimal generator of $\{S_t^-\}_{t \geq 0}$ is
$$
(A_{p}^- f)(x) = \lim_{t \to 0^{+}} \frac{S_t^- f (x) - f(x)}{t} =  (1-x)(xf(x))' = -(A_p^+ f) (x)
$$
with $\mathcal{D}(A_{p}^-) = \mathcal{D}(A_{p}^+)$. Thus, 
\begin{equation}\label{AAAAApppp}
A_{p}^+ = A_{p}, \quad A_{p}^- = -A_{p}, \quad \mathcal{D}(A_{p}^-) = \mathcal{D}(A_{p}^+)=D(A_{p}).
\end{equation}
The technical details involved in the discussion above (splitting the group into two semigroups) is found in  \cite[Sec.~1.6]{MR710486}.

For each $f \in L^p(0, 1)$, Proposition \ref{9oijrteglkfdsfawfgrdgfds1}(b) yields
\begin{equation}\label{1}
 \|S_t^+ f\|_p^p = e^{t(1-p)} \int_0^1 |f(u)|^p ( (e^t-1)u + 1 )^{p-2} du, \quad t \geq 0,
 \end{equation}
and, in a similar way,
\begin{equation}\label{2}
\|S_t^- f\|_p^p = e^{t(p-1)} \int_0^1 |f(u)|^p ( (e^{-t}-1)u + 1 )^{p-2} du, \quad t \geq 0. 
\end{equation}

From \eqref{1}, since $(e^t-1)u+1 \leq e^t$ for $u\in [0,1]$, $t \geq 0$, and $p-2>0$, we conclude
$$
\|S_t^+ f\|_p^p \leq e^{t(1-p)} e^{t(p-2)} \|f\|_p^p= e^{-t} \|f\|_p^p, \quad f \in L^p(0, 1),
$$
and so 
\begin{equation}\label{3}
\|S_t^+\|_{L^p \to L^p} \leq e^{-\frac{t}{p}}.
\end{equation}

From  $(e^{-t}-1)u+1 \leq 1$ for  $u \in [0,1]$, $t \geq 0$, and $p-2>0$,  \eqref{2} yields
$$
\|S_t^- f\|_p^p \leq e^{t(p-1)} \|f\|_p^p, \quad f \in L^p(0, 1).
$$
Hence
\begin{equation}\label{4} 
\|S_t^-\|_{L^p \to L^p} \leq e^{t(\frac{p-1}{p})} = e^{\frac{t}{q}}.
\end{equation}

Similarly from \eqref{3}, the growth bound for $\{S_t^+\}_{t \geq 0}$ is
$$
\omega_0(\{S_t^+\}_{t \geq 0}) = \lim_{t \to \infty} \frac{\log \|S_t^+\|_{L^p \to L^p}}{t} \leq -\tfrac{1}{p},$$
and so 
$\sigma(A_{p}^+) \subseteq \{ \lambda : \Re \lambda \leq -\tfrac{1}{p} \}.$
From  \eqref{4} we also have
$$
\omega_0(\{S_t^-\}_{t \geq 0}) = \lim_{t \to \infty} \frac{\log \|S_t^-\|_{L^p \to L^p}}{t} \leq \tfrac{1}{q},$$
which yields
$  \sigma(A^-) \subseteq
\{ \lambda : \Re \lambda \leq \tfrac{1}{q} \}.
$

The identities in  \eqref{AAAAApppp} give us 
$
\sigma(A_{p}) = \sigma(A_{p}^+) = -\sigma(A_{p}^-),
$
and so
\begin{align}
\sigma(A_p) & \subset  \{\lambda: \Re \lambda \leq -\tfrac{1}{p}\}\cap\{\lambda: \Re \lambda \geq -\tfrac{1}{q}\}\notag \\
& = \{\lambda: -\tfrac{1}{q}\leq \Re \lambda \leq -\tfrac{1}{p}\}.\label{996T}
\end{align}

\begin{Proposition}
 For each $p > 2$ the spectrum of $A_p$ is the vertical strip 
$$
V_p :=\{\lambda: -\tfrac{1}{q}\leq \Re \lambda \leq -\tfrac{1}{p}\}. $$
\end{Proposition}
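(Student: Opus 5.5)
We already have $\sigma(A_p)\subset V_p$ from \eqref{996T}, so only the reverse inclusion $V_p\subset\sigma(A_p)$ needs proof. Since $\sigma(A_p)$ is closed, it suffices to show that every $\lambda$ in the open strip $\{-\frac1q<\Re\lambda<-\frac1p\}$ lies in $\sigma(A_p)$. By Proposition \ref{cc8**} such $\lambda$ are not eigenvalues when $p>2$. The plan is therefore to show that $\lambda I-A_p$ is not surjective, i.e. that $\lambda$ is in the residual spectrum. Equivalently, $\bar\lambda$ should be an eigenvalue of the adjoint $A_p^*$ acting on $L^q(0,1)$.

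First, compute the adjoint formally by integration by parts. For $f\in\mathcal D(A_p)$ and smooth $g$,
$$\int_0^1 -(1-x)(xf)'\,\bar g\,dx=\int_0^1 f\, x\,\overline{\big((1-x)g\big)'}\,dx,$$
provided the boundary terms $x(1-x)f\bar g$ vanish at $0$ and $1$. This gives $A_p^*g=x((1-x)g)'$. The equation $x((1-x)g)'=\bar\lambda g$ is solved by separation of variables, which yields
$$g(x)=\frac{x^{\bar\lambda}}{(1-x)^{\bar\lambda+1}}.$$
We have $g\in L^q$ exactly when $\Re\lambda>-1/q$ (integrability near $0$) and $-(\Re\lambda+1)q>-1$, i.e. $\Re\lambda<-1/p$ (integrability near $1$). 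So $g\in L^q$ precisely for $\lambda$ in the open strip, which is nonempty because $p>2$.

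The cleanest way to finish avoids justifying the adjoint's domain in general. Fix $\lambda$ in the open strip and this $g$. For every $f\in\mathcal D(A_p)$ we show directly that
$$\int_0^1 (\lambda f-A_pf)\,\bar g\,dx=0.$$
Set $h=(\lambda I-A_p)f$. The integrating factor computation in the proof of Theorem \ref{rrrrAAp} gives
$$\big(\mu f\big)'=\frac{x^{\lambda}}{(1-x)^{\lambda+1}}\,h,\qquad \mu(x)=\frac{x^{\lambda+1}}{(1-x)^{\lambda}}.$$
Here $\bar g=x^{\lambda}(1-x)^{-\lambda-1}$ is exactly the right-hand weight. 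Hence
$$\int_a^b h\bar g\,dx=\mu(b)f(b)-\mu(a)f(a),$$
and it remains to show that $\mu f\to0$ at both endpoints, at least along suitable sequences.

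That boundary behaviour is the main obstacle. Near $0$, $|\mu f|\approx x^{\Re\lambda+1}|f|$ with $\Re\lambda+1>1/p$. Since $f\in L^p$, we have $\liminf_{x\to0}x^{1/p}|f(x)|=0$, and this yields a sequence $a_n\to0$ along which $\mu f\to0$. Near $1$, $|\mu f|\approx(1-x)^{-\Re\lambda}|f|$ with $-\Re\lambda>1/p$, and the same liminf argument applies at $1$. Because $h\bar g\in L^1$ by Hölder, the integral over $[a_n,b_n]$ converges to $\int_0^1 h\bar g$. (Note $\mu f$ is absolutely continuous on compact subintervals for $f\in\mathcal D(A_p)$.) Thus $\int_0^1 h\bar g=0$, so the range of $\lambda I-A_p$ is annihilated by the nonzero functional $g\in L^q$. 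The range is therefore not dense, and $\lambda\in\sigma(A_p)$. Taking closures of the open strip then gives $V_p\subset\sigma(A_p)$, and with \eqref{996T} this proves $\sigma(A_p)=V_p$.
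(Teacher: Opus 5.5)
Your proof is correct, but it takes a genuinely different route from the paper.

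The paper argues by non-surjectivity with an explicit right-hand side: $(\lambda I-A_p)(1-x)^\lambda=(\lambda+1)(1-x)^{\lambda+1}\in L^p$ while $(1-x)^\lambda\notin L^p$, so $R(\lambda,A_p)$ cannot exist. You argue by duality. You exhibit $g=x^{\bar\lambda}(1-x)^{-\bar\lambda-1}\in L^q$ and show it annihilates the whole range of $\lambda I-A_p$. Two points make this rigorous: the observation that $h\bar g=(\mu f)'$ for the paper's integrating factor $\mu$, and the $\liminf$ argument showing that $\mu f$ vanishes along sequences at both endpoints.

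Your route buys two things. First, a stronger conclusion: the range is not even dense. Together with Proposition \ref{cc8**}, the open strip is therefore residual spectrum, equivalently $\bar\lambda\in\sigma_p(A_p^*)$. Second, an honest treatment of the homogeneous solution $x^{-\lambda-1}(1-x)^\lambda$, which the paper's phrase ``unique solution'' passes over. To complete the paper's argument one must also check that $(1-x)^\lambda+c\,x^{-\lambda-1}(1-x)^\lambda\notin L^p$ for every $c$. That is exactly where $\Re\lambda>-\frac1q$ is needed at $x=0$, a condition \eqref{ooOkK} never uses. In your proof both strip conditions appear transparently, as the vanishing of $\mu f$ at $0$ and at $1$.

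The paper's route is shorter and more concrete. Both proofs rest on the same input: functions in $\mathcal D(A_p)$ satisfy the ODE with $xf$ locally absolutely continuous, as in \eqref{666FFAris}.

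As a side remark, after the flip $x\mapsto 1-x$ your $g$ is exactly the eigenfunction of $A_q$ from Proposition \ref{cc8**} with $1<q<2$. One can check that $\mathcal F S_t^*\mathcal F=S_t$ on $L^q(0,1)$. So your argument is the concrete form of a $p\leftrightarrow q$ duality that turns point spectrum for $1<q<2$ into residual spectrum for $p>2$.
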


\begin{proof}
The containment 
$\sigma(A_p) \subseteq V_p $  is from  \eqref{996T}. To prove the reverse, it suffices to show that  the open strip
$$
V_p^\circ = \{\lambda: -\tfrac{1}{q} < \Re \lambda < -\tfrac{1}{p}\}
$$
is contained in $\sigma(A_p)$. Indeed, since $A_p$ is a closed operator, it has closed spectrum \cite[p.~240]{MR1721989}. Thus, taking closures, we have $V_p \subseteq \sigma(A_p)$ and hence equality. 
To prove $V_{p}^{\circ} \subset \sigma(A_p)$,  let  $\lambda \in V_p^\circ$ and set 
$f_\lambda(x) = (1-x)^\lambda.$ Observe that 
$$
(\lambda I - A) f_\lambda(x) = (\lambda + 1)(1-x)^{\lambda+1}.$$

Since $-\frac{1}{q} < \Re \lambda < -\frac{1}{p}$ we have
\begin{equation}\label{ooOkK}
f_\lambda(x) = (1-x)^\lambda \notin L^p(0,1)
 \; \mbox{and} \; (1-x)^{\lambda+1} \in L^p(0,1).
 \end{equation}

In fact, $f_\lambda$ is the unique solution to the differential equation
$$
(\lambda I - A_p f)(x) = (\lambda + 1)(1-x)^{\lambda+1}.
$$
Towards a contradiction, assume $\lambda \in \rho(A_p)$. Then 
$
R(\lambda, A_p)
$
is bounded on $L^p(0, 1)$ and so 
$$
R(\lambda, A_p)((\lambda + 1)(1-x)^{\lambda+1}) = f_\lambda(x) = (1-x)^\lambda \in L^p(0,1).
$$
This contradicts \eqref{ooOkK} and thus completes the proof.
\end{proof}

As in the proof of Corollary \ref{spectrums1}, use the identity $C_p=(-A_p)^{-1}$ from \eqref{CpRp} and the spectral mapping theorem for the resolvent \cite[p.~243]{MR1721989} to obtain the following. 

\begin{Corollary}\label{specrsusms2}
When $p \geq 2$, $\sigma(C_p) = \overline{D(\tfrac{p}{2}, \tfrac{p}{2})} \setminus D(\tfrac{q}{2}, \tfrac{q}{2}).$
\end{Corollary}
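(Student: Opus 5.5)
The plan is to transfer the spectral information from the generator to $C_p$ exactly as in Corollary \ref{spectrums1}. By \eqref{CpRp}, $C_p=(-A_p)^{-1}=R(0,A_p)$. For a closed operator $A_p$ with $0\in\rho(A_p)$, the spectral mapping theorem for the resolvent \cite[p.~243]{MR1721989} gives
$$\sigma(C_p)\setminus\{0\}=\{-\lambda^{-1}:\lambda\in\sigma(A_p)\}.$$
Whether $0$ belongs to $\sigma(C_p)$ is decided by the unboundedness of $A_p$. The spectrum of $A_p$ is unbounded: for $p>2$ it is the full strip $V_p$, and for $p=2$ it is the line $\Re\lambda=-\tfrac12$ (see below). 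Hence $0$ lies in the closure of the image and so belongs to $\sigma(C_p)$.

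Next we identify the image of the strip. For $p>2$ the preceding Proposition gives $\sigma(A_p)=V_p=\{-\tfrac1q\le\Re\lambda\le-\tfrac1p\}$. Put $w=-1/\lambda$, so that $\lambda=-1/w$ and $\Re\lambda=-\Re(1/w)$. Then $\lambda\in V_p$ if and only if $\tfrac1p\le\Re\tfrac1w\le\tfrac1q$. Write $w=u+iv$, so $\Re(1/w)=u/(u^2+v^2)$. The condition $\Re(1/w)\ge \tfrac1p$ becomes $u^2+v^2\le pu$, that is, $w\in\overline{D(\tfrac p2,\tfrac p2)}\setminus\{0\}$. Likewise $\Re(1/w)\le\tfrac1q$ becomes $u^2+v^2\ge qu$, that is, $w\notin D(\tfrac q2,\tfrac q2)$. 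Adding the point $0$ gives
$$\sigma(C_p)=\overline{D(\tfrac p2,\tfrac p2)}\setminus D(\tfrac q2,\tfrac q2).$$
This set is closed, consistent with $0$ being its only boundary point not attained by the image.

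The case $p=2$ needs a separate argument, because the preceding Proposition assumed $p>2$. Here \eqref{996T} gives $\sigma(A_2)\subseteq\{\Re\lambda=-\tfrac12\}$, and the formula collapses to the circle $\partial D(1,1)$, matching \eqref{resultsp=2} from \cite{CL2}. To avoid relying on the citation, I would rerun the $f_\lambda$ argument. The open strip is now empty, so take $\lambda$ on the line with $f_\lambda(x)=(1-x)^\lambda$ truncated, and show that $\lambda I-A_2$ is not bounded below, which gives approximate point spectrum.

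This $p=2$ step is the main obstacle. I expect the cleanest route is to cite \cite{CL2} for $p=2$, and otherwise use approximate eigenvectors $(1-x)^{\lambda+\varepsilon}$ with $\varepsilon\to0^+$, normalized in $L^2$.
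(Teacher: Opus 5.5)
Your proposal is correct and follows the paper's route. The paper combines $C_p=(-A_p)^{-1}$ with $\sigma(A_p)=V_p$ and the spectral mapping theorem for the resolvent under $z\mapsto -z^{-1}$, and it likewise defers the case $p=2$ to \cite{CL2}. Your explicit treatment of the point $0$, together with your $\varepsilon$-perturbed approximate eigenvectors $(1-x)^{\lambda+\varepsilon}$ for $p=2$, is extra care the paper omits. The approximate eigenvectors do work, since $\|(\lambda I-A_2)(1-x)^{\lambda+\varepsilon}\|_2$ stays bounded while $\|(1-x)^{\lambda+\varepsilon}\|_2=(2\varepsilon)^{-1/2}\to\infty$.
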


When $p = 2$, the spectrum of $C_2$ becomes the circle $\partial D(1, 1)$, which was already observed in \cite{CL2}. 

\section{Norm}

We complete our discussion of Theorem \ref{MainT} with the norm of $C_p$.

\begin{Corollary}\label{finallla}
$$\|C_{p}\| = \begin{cases}
q & \mbox{if $1 < p < 2$,}\\
p & \mbox{if $p \geq 2$.}
\end{cases}$$
\end{Corollary}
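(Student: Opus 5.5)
The upper bounds are already in hand from \eqref{upperCp}: $\|C_p\|\le q$ for $1<p<2$ and $\|C_p\|\le p$ for $p\ge 2$. These come from Proposition \ref{9oijrteglkfdsfawfgrdgfds1}, the Laplace representation \eqref{LPppp} at $\lambda=0$, and $C_p=R(0,A_p)$. So only the lower bounds remain.

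For the lower bounds we use the spectral radius. Since $\|C_p\|\ge r(C_p)=\max\{|z|:z\in\sigma(C_p)\}$, it suffices to locate the point of the spectrum farthest from the origin.

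When $1<p<2$, Corollary \ref{spectrums1} gives $\sigma(C_p)=\overline{D(\tfrac q2,\tfrac q2)}\setminus D(\tfrac p2,\tfrac p2)$. Here $q>2>p$, so the real point $z=q$ lies on $\partial D(\tfrac q2,\tfrac q2)$ and satisfies $|q-\tfrac p2|=q-\tfrac p2>\tfrac p2$. Hence $q\notin D(\tfrac p2,\tfrac p2)$, so $q\in\sigma(C_p)$ and $r(C_p)\ge q$.

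When $p\ge 2$, Corollary \ref{specrsusms2} gives $\sigma(C_p)=\overline{D(\tfrac p2,\tfrac p2)}\setminus D(\tfrac q2,\tfrac q2)$. Now $p\ge 2\ge q$, so $z=p$ lies in the closed disc, and $p-\tfrac q2\ge \tfrac q2$ shows $p\notin D(\tfrac q2,\tfrac q2)$. Thus $p\in\sigma(C_p)$ and $r(C_p)\ge p$.

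Combining the lower bounds with \eqref{upperCp} gives equality in both cases. In fact $r(C_p)=\|C_p\|$, since every point of each disc has modulus at most its diameter.

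The one check that needs care is that the extreme point $q$ (respectively $p$) is not removed by the inner open disc. This is where the inequalities $p<2<q$ (respectively $q\le 2\le p$) are used; in the boundary case $p=2$ the two discs coincide and $\sigma(C_2)=\partial D(1,1)$ still contains $2$.

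Should the spectral results feel too indirect, an alternative lower bound for $p\ge2$ would use test functions $x^{-1/p+\varepsilon}$. For these, $C_p$ acts near $0$ approximately as the Hardy averaging operator, giving ratio $\approx 1/(1-1/p+\varepsilon)$, which is smaller than $p$. Getting up to $p$ that way would instead require concentrating near $x=1$ with $(1-x)^{-1/p+\varepsilon}$, and this is messier. The spectral radius argument avoids all of this, so it is the route I would take.
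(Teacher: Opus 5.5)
Your proposal is correct and is essentially the paper's own argument. The upper bound comes from \eqref{upperCp}, and the lower bound comes from the spectral radius formula applied to the crescent descriptions of $\sigma(C_p)$ in Corollaries \ref{spectrums1} and \ref{specrsusms2}; your explicit check that the farthest point $q$ (respectively $p$) is not removed by the inner open disc, including the degenerate case $p=2$, is a detail the paper leaves implicit.
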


\begin{proof}
For each of the cases $1 < p < 2$ and $p \geq 2$, the upper bound $\leq$ follows from \eqref{upperCp}. The lower bound in each case follows from the description of the spectrum of $C_p$ (Corollary \ref{spectrums1} and Corollary \ref{specrsusms2}), along with the spectral radius formula. 
\end{proof}

\section{Invariant subspaces}\label{IJnvariant}

Analogous to our initial paper \cite{CL2} on the real variable Ces\`{a}ro operator, where we explored the invariant subspaces of $C_2$, in this section we explore the invariant subspaces for $C_p$, $1 < p < \infty$. Their description, as well as the techniques used to examine them, become more complicated. We direct the reader to \cite{GPRH, GPR} for similar looking results in related settings. The main result of this section is the following.

\begin{Theorem}\label{invariant s7s}
Let $1 < p < \infty$. For a  closed subspace $\mathcal{M} \subset L^p(0, 1)$, the following are equivalent. 
\begin{enumerate}
\item $C_p \mathcal{M} \subset \mathcal{M}$;
\item $S_{t} \mathcal{M} \subset \mathcal{M}$ for all $t \geq 0$.
\end{enumerate}
\end{Theorem}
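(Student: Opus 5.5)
The plan is to use the two identities already established. The first is $C_p = R(0,A_p) = (-A_p)^{-1}$ from \eqref{CpRp}. The second is the Laplace transform representation \eqref{LPppp}, which for $\lambda = 0$ reads
$$C_p f = \int_0^\infty S_t f \, dt, \qquad f \in L^p(0,1),$$
where the integral converges absolutely in norm because $\|S_t\|_{L^p\to L^p} \leq e^{-t/q}$ or $e^{-t/p}$ by Proposition \ref{9oijrteglkfdsfawfgrdgfds1}.

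\textbf{The implication (b) $\Rightarrow$ (a).} Let $\mathcal{M}$ be closed and $S_t$-invariant for all $t \geq 0$, and take $f \in \mathcal{M}$. The map $t \mapsto S_t f$ is continuous with values in $\mathcal{M}$. The Bochner integral $\int_0^\infty S_t f\,dt$ is a norm limit of Riemann sums, each of which lies in $\mathcal{M}$. Since $\mathcal{M}$ is closed, $C_p f \in \mathcal{M}$.

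\textbf{The implication (a) $\Rightarrow$ (b).} The idea is to recover $S_t$ from the resolvent of $A_p$, expressed entirely through $C_p$. For $\lambda$ with $\Re\lambda > \omega_0$, the resolvent identity gives
$$R(\lambda, A_p) = (\lambda I - A_p)^{-1} = C_p (I + \lambda C_p)^{-1}.$$
This holds because $\lambda I - A_p = (I + \lambda C_p)(-A_p)$ on $\mathcal{D}(A_p)$. The operator $(I+\lambda C_p)^{-1}$ exists exactly when $-1/\lambda \notin \sigma(C_p)$, which is the spectral mapping correspondence used in Corollary \ref{spectrums1}.

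The key point is that $\mathcal{M}$ must be invariant under $(I+\lambda C_p)^{-1}$, and not merely under $I+\lambda C_p$.
\begin{itemize}
\item For $|\lambda| < 1/\|C_p\|$, the Neumann series $\sum_n (-\lambda)^n C_p^n$ shows that $(I+\lambda C_p)^{-1}$ leaves $\mathcal{M}$ invariant.
\item This does not reach the large positive $\lambda$ needed next. To get there, note that the set $U = \{\lambda \in \rho(A_p) : R(\lambda,A_p)\mathcal{M} \subset \mathcal{M}\}$ is relatively open and closed in the connected component of $\rho(A_p)$ containing the half-plane $\Re\lambda > \omega_0$.
\item Openness: near any $\lambda_0 \in U$, the resolvent is the power series $\sum_n (\lambda_0 - \lambda)^n R(\lambda_0,A_p)^{n+1}$, and each term maps $\mathcal{M}$ into $\mathcal{M}$.
\item Closedness: use norm-continuity of the resolvent together with the closedness of $\mathcal{M}$.
\item $U$ is nonempty, since $0 \in U$ because $R(0,A_p) = C_p$.
\end{itemize}
By connectedness, $U$ contains all real $\lambda > 0$. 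For $1<p<2$, the right half-plane component of $\rho(A_p)$ is $\{\Re\lambda > -1/q\}$, which contains $0$. For $p \geq 2$, it is $\{\Re \lambda > -1/p\}$, which also contains $0$.

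\textbf{Recovering $S_t$.} With $R(\lambda,A_p)\mathcal{M}\subset\mathcal{M}$ for all large real $\lambda$, apply the exponential (Post--Widder) formula for $C_0$-semigroups:
$$S_t f = \lim_{n\to\infty} \Big(\tfrac{n}{t} R(\tfrac{n}{t}, A_p)\Big)^n f,$$
with the limit taken in norm. Each term lies in $\mathcal{M}$, and $\mathcal{M}$ is closed, so $S_t f \in \mathcal{M}$. The Yosida approximation $e^{tA_\lambda}$, with $A_\lambda = \lambda^2 R(\lambda,A_p) - \lambda I$, would serve equally well, since it is a norm-convergent series in $R(\lambda,A_p)$.

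\textbf{Expected obstacle.} The main difficulty is the connectedness step: passing from invariance under $C_p = R(0,A_p)$ to invariance under $R(\lambda,A_p)$ for large $\lambda$. One must confirm that $0$ and $+\infty$ lie in the same component of $\rho(A_p)$. This follows from Theorem \ref{rrrrAAp} together with the explicit strip description of $\sigma(A_p)$.
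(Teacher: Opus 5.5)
Your proof is correct, but for (a)$\Rightarrow$(b) it takes a genuinely different route from the paper. For (b)$\Rightarrow$(a) it is essentially the paper's argument: the paper writes $V=I+2(\widetilde A-I)^{-1}$ and uses the Laplace transform of the rescaled semigroup, which amounts to your $C_p=\int_0^\infty S_t\,dt$.

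\textbf{The paper's route.} It rescales to the contraction semigroup $\widetilde S_t=e^{t}S_{qt}$ (resp.\ $e^{t}S_{pt}$) and notes that its cogenerator is $V=I-\frac{2}{q}C_p$ (resp.\ $I-\frac{2}{p}C_p$). It then proves a Banach-space version of the cogenerator lemma. Invariance under $V$ gives invariance under $(\lambda I-\widetilde A)^{-1}$ for every $\lambda>0$, by expanding $\{(\frac{\lambda-1}{\lambda+1})V-I\}^{-1}$ as a geometric series in $V$. This is where the external estimate $\|V^n\|=O(\sqrt n)$ is needed. Finally it reaches $\widetilde S_t$ through the Yosida approximants.

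\textbf{Your route.} You propagate invariance from the single point $R(0,A_p)=C_p$ to every $R(\lambda,A_p)$ with $\lambda$ in the connected half-plane $\{\Re \lambda>\omega_0\}\subset\rho(A_p)$. You do this by an open--closed argument on the local resolvent power series. Equivalently, for $f\in\mathcal M$ and $g\in\mathcal M^{\perp}$, the analytic function $\lambda\mapsto\langle R(\lambda,A_p)f,g\rangle$ has Taylor coefficients at $0$ proportional to $\langle C_p^{n+1}f,g\rangle=0$, so it vanishes on the whole half-plane. You then finish with the exponential formula.

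\textbf{Comparison.} Your route is more elementary and self-contained. It needs no rescaling, no cogenerator and no power bound for $V$. It does not even need the exact strip $\sigma(A_p)$, contrary to what you suggest at the end: only $\omega_0<0$ from \eqref{www000} is used, i.e.\ that $0$ lies in the component of $\rho(A_p)$ containing a right half-plane. You are right to insist on that particular component. $\rho(A_p)$ also has a left component, where $R(\lambda,A_p)=-\int_0^\infty e^{\lambda t}S_{-t}\,dt$ is built from the backward semigroup. Those resolvents do not preserve, for example, the subspace of functions vanishing on $[0,a]$. In exchange, the paper's route identifies $C_p$ with an affine function of a cogenerator. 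It also yields a general invariant-subspace lemma for contraction semigroups on arbitrary Banach spaces.
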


The proof of this theorem requires a few technical details. The first  involves contractive semigroups on Banach spaces. Suppose that $\{T_t\}_{t \geq 0}$ is a strongly continuous contractive semigroup on a Banach space $\mathcal{X}$ in that $\|T_t\|_{\mathcal{X} \to \mathcal{X}} \leq 1$ for all $t \geq 0$. If $A$ is the corresponding infinitesimal generator for $\{T_t\}_{t \geq 0}$, then, as discussed in \S \ref{sec2},  $1 \in \rho(A)$ and so we can define the {\em cogenerator}
$$V: = (A + I)(A - I)^{-1}.$$
This operator is bounded on $\mathcal{X}$. 
This  next result from \cite[Thm.~5.2]{MR2460937} estimates the norm of the powers of $V$. 

\begin{Lemma}\label{Oososos000PP}
If $\{T_{t}\}_{t \geq 0}$ is a strongly continuous contractive semigroup on a Banach space $\mathcal{X}$, then 
$$\|V^{n}\| = O(\sqrt{n}), \quad n \to \infty.$$
\end{Lemma}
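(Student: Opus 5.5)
The plan is to express the powers $V^n$ through the resolvent, write them as a Laplace-type integral of the semigroup against a Laguerre polynomial, and then bound the $L^1$ norm of that polynomial. Since $A$ generates a contraction semigroup, $1\in\rho(A)$ and
$$V=(A+I)(A-I)^{-1}=I+2(A-I)^{-1}=I-2R(1,A).$$
From \eqref{LPppp}, and by differentiating (or iterating) the Laplace transform, we get
$$R(1,A)^k x=\int_0^\infty \frac{t^{k-1}}{(k-1)!}e^{-t}T_t x\,dt,\qquad k\ge 1 .$$
Expanding $V^n=(I-2R(1,A))^n$ binomially and inserting this formula gives
$$V^n x = x+\int_0^\infty e^{-t}\Big(\sum_{k=1}^n\binom{n}{k}\frac{(-2)^k t^{k-1}}{(k-1)!}\Big)T_t x\,dt .$$
The bracketed sum is recognized as $-2L^{(1)}_{n-1}(2t)$, where $L^{(1)}_{m}(s)=\sum_{j=0}^m\binom{m+1}{m-j}\frac{(-s)^j}{j!}$ is the generalized Laguerre polynomial. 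This is a direct coefficient comparison: $\binom{n}{k}/(k-1)!=\binom{n}{n-k}/(k-1)!$, with $j=k-1$.

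Using $\|T_t\|\le 1$ and substituting $s=2t$, the representation yields
$$\|V^n\|\le 1+\int_0^\infty e^{-s/2}\,|L^{(1)}_{n-1}(s)|\,ds .$$
The problem is therefore reduced to a purely scalar estimate:
$$J_m:=\int_0^\infty e^{-s/2}|L^{(1)}_m(s)|\,ds=O(\sqrt{m}).$$
No further operator theory is needed beyond contractivity.

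The main obstacle is this scalar $L^1$ bound, and I would split the integral into regions.

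On $[0,1/m]$, use the trivial bound $|L^{(1)}_m(s)|\le L^{(1)}_m(0)=m+1$. This contributes $O(1)$.

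A naive Cauchy--Schwarz argument against the orthogonality relation $\int_0^\infty e^{-s}s\,(L^{(1)}_m)^2\,ds=m+1$ only gives $O(\sqrt{m\log m})$, so finer information is needed. I would invoke the classical uniform Laguerre estimates (Szeg\H{o}, Askey--Wainger) in the following regions, where $\nu=4m+2\cdot 1+2$:
\begin{itemize}
\item For $1/m\le s\le \nu/2$: $e^{-s/2}|L^{(1)}_m(s)|\lesssim m^{1/4}s^{-3/4}$. This integrates to $O(m^{1/4}\nu^{1/4})=O(\sqrt m)$.
\item For $\nu/2\le s\le 3\nu/2$, the transition region: $e^{-s/2}|L^{(1)}_m(s)|\lesssim m^{1/2}\nu^{-1/4}\big(\nu^{1/3}+|\nu-s|\big)^{-1/4}$. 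This integrates to $O(m^{1/2}\nu^{-1/4}\nu^{3/4}\nu^{-1/2})=O(\sqrt m)$.
\item For $s\ge 3\nu/2$: $e^{-s/2}|L^{(1)}_m(s)|$ decays exponentially, contributing $O(1)$.
\end{itemize}
Summing the regions gives $J_m=O(\sqrt m)$, hence $\|V^n\|=O(\sqrt n)$.

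If I prefer to avoid citing these pointwise estimates, I would first try to obtain the middle-range bound on $[1/m,\nu/2]$ from the generating function, or from a dyadic Cauchy--Schwarz argument using orthogonality on dyadic blocks.
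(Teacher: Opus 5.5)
The paper gives no argument for this lemma. It is quoted as Theorem~5.2 of the cited reference, and only its conclusion is used (to expand $\{\frac{\lambda-1}{\lambda+1}V-I\}^{-1}$ as a geometric series in the next lemma). Your proposal is therefore a genuinely different route, namely an actual proof, and it is correct.

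The reduction checks out: $V=I-2R(1,A)$, $R(1,A)^k=\frac{1}{(k-1)!}\int_0^\infty t^{k-1}e^{-t}T_t\,dt$, and the binomial sum is exactly $-2L^{(1)}_{n-1}(2t)$. For example, $n=2$ gives $V^2=I-2\int_0^\infty e^{-t}(2-2t)T_t\,dt$. Hence $\|V^n\|\le 1+J_{n-1}$.

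The scalar estimate also works, once the Askey--Wainger bounds are converted back. Those bounds are stated for the normalized functions $(m+1)^{-1/2}s^{1/2}e^{-s/2}L^{(1)}_m(s)$, so you must multiply by $(m+1)^{1/2}s^{-1/2}$. There is one slip here. In the transition region the conversion gives $e^{-s/2}|L^{(1)}_m(s)|\lesssim m^{1/2}\nu^{-3/4}(\nu^{1/3}+|\nu-s|)^{-1/4}$. The bound you displayed, with $\nu^{-1/4}$, would only integrate to $O(m)$. Your subsequent arithmetic already includes the missing factor $\nu^{-1/2}$, so this is a typo rather than a gap.

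Compared with the bare citation, your route makes the mechanism explicit and shows it is sharp.
\begin{itemize}
\item It uses only $\sup_t\|T_t\|\le M$, so it gives $\|V^n\|\le 1+MJ_{n-1}$ for any bounded $C_0$-semigroup, not just contractive ones.
\item Nothing is lost in the reduction. For the translation group on $L^1(\mathbb{R})$, the operator $V^n$ is convolution with a measure of total variation exactly $1+J_{n-1}$.
\item Since $J_m\asymp\sqrt m$, the exponent $\tfrac12$ cannot be improved on general Banach spaces. (On Hilbert spaces $V$ is a contraction, as the paper remarks.)
\end{itemize}

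The price is the dependence on fine pointwise Laguerre asymptotics, and your suggested fallback does not avoid it. Orthogonality alone only bounds each dyadic block $\int_{2^k}^{2^{k+1}}e^{-s}s\,(L^{(1)}_m)^2\,ds$ by $m+1$, which yields only $O(\sqrt m\,\log m)$. To get $O(\sqrt m)$ you would need the local bound $O(\sqrt{m\,2^{k}})$ on each block, and that is the same asymptotic information again.
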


For Hilbert spaces, it turns out that $V$ is contractive when $\{T_t\}_{t \geq 0}$ is contractive and thus $\|V^n\| \leq 1$ for all $n$. This is not always the case in the Banach space setting.
This next detail, proved for Hilbert spaces, is from \cite[p.~146]{MR629828}. We follow their proof with an appropriate alteration needed for the Banach space case using the previous lemma. 

\begin{Lemma}\label{poisdfdfhHHSJdhSJD}
Let $\{T_{t}\}_{t \geq 0}$ be a strongly continuous contractive semigroup on a Banach space $\mathcal{X}$ with cogenerator $V$. Then,  for a closed subspace $\mathcal{M} \subset \mathcal{X}$, the following are equivalent. 
\begin{enumerate}
\item $T_t \mathcal{M} \subset \mathcal{M}$ for all $t \geq 0$; 
\item $V \mathcal{M} \subset \mathcal{M}$. 
\end{enumerate}
\end{Lemma}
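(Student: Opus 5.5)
The plan is to prove the two implications separately. Only the converse (b)$\Rightarrow$(a) needs Lemma~\ref{Oososos000PP}; the forward direction follows from resolvent formulas.

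\textbf{(a)$\Rightarrow$(b).} Suppose $T_t\mathcal{M}\subset\mathcal{M}$ for all $t\ge 0$. Since the semigroup is contractive, the Laplace transform representation \eqref{LPppp} holds for all $\Re\lambda>0$:
\[
(A-I)^{-1} = -R(1,A) = -\int_0^\infty e^{-t}T_t\,dt .
\]
This is a Bochner integral of $\mathcal{M}$-valued functions when applied to $f\in\mathcal{M}$, and $\mathcal{M}$ is closed, so $(A-I)^{-1}\mathcal{M}\subset\mathcal{M}$. Next, write
\[
V=(A+I)(A-I)^{-1}=I+2(A-I)^{-1}.
\]
This identity holds because $(A+I)(A-I)^{-1}=(A-I+2I)(A-I)^{-1}$. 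It follows at once that $V\mathcal{M}\subset\mathcal{M}$.

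\textbf{(b)$\Rightarrow$(a).} Suppose $V\mathcal{M}\subset\mathcal{M}$.

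First, $(A-I)^{-1}=\tfrac12(V-I)$, so $R(1,A)$ leaves $\mathcal{M}$ invariant. Every polynomial in $V$ also leaves $\mathcal{M}$ invariant.

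The goal is to express $T_t$ as a limit of polynomials in $V$ (or in $R(1,A)$), with the limit taken strongly so that closedness of $\mathcal{M}$ applies. I would follow the Hilbert space argument of \cite[p.~146]{MR629828}. There one writes $T_t=e^{tA}$ and uses the Cayley-type relation $A=(V+I)(V-I)^{-1}$ (where $V-I=2(A-I)^{-1}$ is injective). This gives
\[
T_t=\exp\bigl(t(V+I)(V-I)^{-1}\bigr),
\]
which is the function $e_t(z)=\exp\bigl(t\tfrac{z+1}{z-1}\bigr)$ applied to $V$. The function $e_t$ is bounded by $1$ on the closed unit disc, except at $z=1$, because $\Re\frac{z+1}{z-1}\le 0$ for $|z|\le 1$.

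To make this rigorous without a functional calculus, I would use Abel/Cesàro means of the Taylor series $e_t(z)=\sum_n c_n(t)z^n$. Take the polynomials
\[
p_N(V)=\sum_{n\le N}\Bigl(1-\tfrac{n}{N+1}\Bigr)c_n(t)V^n ,
\]
or alternatively use $e_t(rV)$ with $r\uparrow 1$ and $\|e_t(rV)\|\le\sum_n|c_n(t)|r^n\|V^n\|$.

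I would first show convergence to $T_tf$ on a dense set of $f$, namely vectors of the form $f=(A-I)^{-1}g$, equivalently $f=(V-I)h$. On this set the factor $(z-1)$ kills the singularity at $z=1$, and convergence should reduce to a resolvent computation. I would then extend to all $f$ by a uniform bound on the approximating operators.

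\textbf{Main obstacle.} The hard step is exactly this uniform bound. In the Hilbert case, von Neumann's inequality gives $\|p(V)\|\le\sup_{\mathbb D}|p|$, so the bound is free. In a Banach space we only have $\|V^n\|=O(\sqrt n)$ from Lemma~\ref{Oososos000PP}.

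My plan is to avoid needing uniform bounds on all of $\mathcal{X}$ and to argue with closedness instead. Since $\mathcal{M}$ is closed, it suffices to show that $T_tf$ lies in the closed span of $\{V^nf\}$. By Hahn--Banach, this reduces to the following: if $\ell\in\mathcal{X}^*$ annihilates $V^nf$ for all $n$, then $\ell(T_tf)=0$.

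Consider the scalar function $F(z)=\sum_n \ell(V^nf)z^n$. Its coefficients are $O(\sqrt n)$ by Lemma~\ref{Oososos000PP}, so $F$ is analytic on $\mathbb D$. For $\Re\lambda>0$ we have
\[
\ell(R(\lambda,A)f)=\ell\Bigl(\tfrac{1}{2}(1-w)(I-wV)^{-1}\ldots f\Bigr),\qquad w=\frac{\lambda-1}{\lambda+1}\in\mathbb D,
\]
which expresses $\ell(R(\lambda,A)f)$ via $F(w)$; the resolvent expansion $R(\lambda,A)=\sum$ of powers of $V$ converges in norm because $|w|<1$ and $\|V^n\|$ grows only polynomially. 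If $\ell$ annihilates all $V^nf$, then $F\equiv 0$. Hence $\ell(R(\lambda,A)f)=0$ for all $\Re\lambda>0$, so the Laplace transform of the continuous bounded function $t\mapsto\ell(T_tf)$ vanishes. Uniqueness of the Laplace transform then gives $\ell(T_tf)=0$ for all $t$.

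This duality route uses the $O(\sqrt n)$ bound only to make the Neumann-type series in $w$ converge. I expect it to be the cleanest way around the lack of von Neumann's inequality. The remaining delicate point is verifying the exact algebraic identity expressing $R(\lambda,A)$ as a power series in $V$.
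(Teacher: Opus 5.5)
Your proof is correct. For (a)$\Rightarrow$(b) it is exactly the paper's argument; for (b)$\Rightarrow$(a) you take a genuinely different route.

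Both proofs of the converse begin from the same identity. For $\operatorname{Re}\lambda>0$ (the paper only needs $\lambda>0$) put $w=\frac{\lambda-1}{\lambda+1}$. Then $\lambda I-A=(\lambda+1)(wV-I)(V-I)^{-1}$ on $\mathcal{D}(A)$, and hence
\[
R(\lambda,A)=\tfrac{1}{2}(1-w)(I-V)(I-wV)^{-1}=\tfrac{1}{2}(1-w)(I-V)\sum_{n\ge 0}w^nV^n .
\]
The series converges in norm because $|w|<1$ and $\|V^n\|=O(\sqrt n)$. This supplies the missing factor $(I-V)$ in your ellipsis. It is also the ``delicate identity'' you flag, which is really a three-line computation.

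From that point the two arguments diverge:
\begin{itemize}
\item The paper concludes that $\mathcal{M}$ is invariant under the Yosida approximants $A_\lambda=\lambda^2R(\lambda,A)-\lambda I$, hence under $e^{tA_\lambda}$. It then invokes the strong convergence $e^{tA_\lambda}x\to T_tx$ from the proof of the Hille--Yosida theorem.
\item You fix $f\in\mathcal{M}$ and a functional $\ell$ annihilating $\{V^nf\}$, and read off $\ell(R(\lambda,A)f)=0$. Uniqueness of the Laplace transform of the bounded continuous function $t\mapsto\ell(T_tf)$ then gives $\ell(T_tf)=0$. Hahn--Banach places $T_tf$ in $\overline{\operatorname{span}}\{V^nf\}\subset\mathcal{M}$.
\end{itemize}

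What each buys: your route replaces the Hille--Yosida approximation theorem by scalar Laplace uniqueness, so it is more elementary. The paper's route is constructive and avoids duality: it exhibits each $T_t$ as a strong limit of operators in the norm-closed algebra generated by $V$.

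The generating function $F$ and the Ces\`aro/Abel polynomial detour are unnecessary. Once $R(\lambda,A)f$ is a norm limit of polynomials in $V$ applied to $f$, the vanishing of $\ell(R(\lambda,A)f)$ is immediate.

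As an aside, both arguments use the $O(\sqrt n)$ bound only to make $\sum w^nV^n$ converge for $|w|<1$. For that, $r(V)\le 1$ already suffices. This follows from $\sigma(A)\subset\{\operatorname{Re}\mu\le 0\}$ together with $\sigma(V)\subset\{\frac{\mu+1}{\mu-1}:\mu\in\sigma(A)\}\cup\{1\}\subset\overline{\mathbb{D}}$.
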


\begin{proof}
Observe that 
$V = I + 2 (A - I)^{-1}$
and from our discussion in \S\ref{sec2},
$$( I - A)^{-1} = \int_{0}^{\infty} e^{-t} T_t dt.$$ Hence,  if $\mathcal{M}$ is invariant for each $T_t$, $t \geq 0$,  then $\mathcal{M}$ is invariant for $V$. 

Conversely, suppose  $\mathcal{M}$ is invariant for $V$. For each $\lambda > 0$, define the operator $A_{\lambda}$ on $\mathcal{X}$  by 
$$A_{\lambda} := \lambda^2 (\lambda I - A)^{-1} - \lambda I.$$ We now show that $\mathcal{M}$ is invariant for $A_{\lambda}$ by showing that $\mathcal{M}$ is invariant for $(\lambda I - A)^{-1}$. Observe that 
\begin{align*}
(\lambda I - A)^{-1} & = \Big(\lambda I - (V + I)(V - I)^{-1}\Big)^{-1}\\
& = (V - I) (\lambda (V - I)  - (V + I))^{-1}\\
& = (V - I) \frac{1}{\lambda + 1}  \Big\{\Big(\frac{\lambda - 1}{\lambda + 1}\Big) V - I\Big\}^{-1}.
\end{align*}
Since 
$$-1 < \frac{\lambda - 1}{\lambda + 1} < 1 \; \; \mbox{and} \; \; \|V^{n}\| = O(\sqrt{n}), \quad \lambda > 0, n \in \N,$$
from Lemma \ref{Oososos000PP}, the operator 
$$  \Big\{\Big(\frac{\lambda - 1}{\lambda + 1}\Big) V - I\Big\}^{-1}$$
can be expanded as a convergent geometric series in operator norm. By our assumption that  $\mathcal{M}$ is invariant for $V$, we see that $\mathcal{M}$ is invariant for each $A_{\lambda}$, $\lambda > 0$. 
The semigroup $\{T_t\}_{t \geq 0}$ is contractive and so \eqref{LPppp} yields
$$\|(\lambda I - A)^{-1}\| \leq \frac{1}{\lambda}, \quad \lambda > 0,$$
and hence $\|A_{\lambda}\| \leq 2 \lambda$. Thus, since the exponential function is entire, the operator $e^{t A_{\lambda}}$ defined by 
$$e^{t A_{\lambda}} = \sum_{n = 0}^{\infty} t^n \frac{A_{\lambda}^{n}}{n!}$$ is a well defined bounded operator and the series above converges in operator norm.
For each $\vec{x} \in \mathcal{M}$,  the proof of the Hille--Yoshida theorem (which works for contractive semigroups on Banach spaces -- see \cite[p.~144]{MR629828}) implies that the semigroup $\{e^{t A_{\lambda}}\}_{t \geq 0}$ satisfies 
$$
\lim_{\lambda \to \infty} e^{t A_{\lambda}} \vec{x}  = T_{t} \vec{x}, \quad  \vec{x} \in \mathcal{X}, t \geq 0.
$$
Since, $A_{\lambda} \mathcal{M} \subset \mathcal{M}$ for all $\lambda > 0$, it follows from the above series expansion that $e^{t A_{\lambda}} \mathcal{M} \subset \mathcal{M}$ for all $\lambda, t \geq 0$ and so $T_{t} \mathcal{M} \subset \mathcal{M}$ for all $t \geq 0$. 
\end{proof}
 
The proof of Theorem \ref{invariant s7s} is as follows: When $1 < p < 2$, define 
$$\widetilde{S}_{t} := e^{t} S_{q t}$$ and observe from Proposition \ref{9oijrteglkfdsfawfgrdgfds1}, that $\{\widetilde{S}_t\}_{t \geq 0}$ is a contractive semigroup. Moreover, standard theory says that if $A$ is the infinitesimal generator for $\{S_t\}_{t \geq 0}$, then 
$$\widetilde{A} = q A + I$$ is the infinitesimal generator for $\{\widetilde{S}_{t}\}_{g \geq 0}$. In addition, the cogenerator $V$ of $\{\widetilde{S}_{t}\}_{t \geq 0}$  is 
\begin{align*}
V & = (\widetilde{A} + I)(\widetilde{A} - I)^{-1}\\
& = (q A + 2 I)(q A)^{-1}\\
& = I + \tfrac{2}{q} A^{-1}\\
& = I - \tfrac{2}{q} C_{p}.
\end{align*}
Thus, the operators $V$ and $C_{p}$ share the same invariant subspaces. Now apply Lemma \ref{poisdfdfhHHSJdhSJD}.
When $p \geq 2$, apply the same argument but with the contractive semigroup, via Proposition \ref{9oijrteglkfdsfawfgrdgfds1}, defined by 
$$\widetilde{S}_t = e^{t} S_{p t}, \quad t \geq 0.$$ This completes the proof. 

\begin{Example}
For $0 < a < 1$, define $\mathcal{M}_a$ to be subspace of all $f \in L^p(0, 1)$  that  vanish almost everywhere on $[0, a]$. One can check that $\mathcal{M}_{a}$ is invariant for all $S_t$, $t > 0$, and thus an invariant subspace for $C_p$. 
\end{Example}

\begin{Example}
When $1 < p < 2$ we have 
$$f_{\lambda}(x) = \frac{x^{\frac{1}{\lambda} - 1}}{(1 - x)^{\frac{1}{\lambda}}} \in L^p(0, 1)$$ whenever $\lambda$ belongs to the crescent $D(\frac{q}{2}, \frac{q}{2}) \setminus \overline{D(\frac{p}{2}, \frac{p}{2})}$. Moreover, since $f_{\lambda}$ is an eigenvector for $C_p$  (Proposition \ref{crescenty}), the one dimensional subspace $\C f_{\lambda}$ is an invariant subspace for $C_p$. A calculation verifies that 
$$S_{t} f_{\lambda} = e^{-\frac{t}{\lambda}} f_{\lambda}, \quad t > 0,$$
which confirms Theorem \ref{invariant s7s}. Worth exploring from here is the following idea. Suppose $E \subset D(\frac{q}{2}, \frac{q}{2}) \setminus \overline{D(\frac{p}{2}, \frac{p}{2})}$ and consider the subspace 
$$\mathcal{M}_{E} := \overline{\operatorname{span}}\{f_{\lambda}: \lambda \in E\}.$$ Observe that $\mathcal{M}_{E}$ is an invariant subspace for $C_p$. The only question is whether or not $\mathcal{M}_{E} = L^p(0, 1)$. If $E$ is a finite set, then clearly $\mathcal{M}_{E} \not = L^p(0, 1)$. When $E$ is infinite and sufficiently rich, is $\mathcal{M}_{E} = L^p(0, 1)$? 
\end{Example}

To better understand  the structure of the common invariant subspaces for $\{S_t\}_{t \geq 0}$, we expand a discussion from \cite{CL2} and relate them to the common invariant subspaces for a related semigroup  on $L^p(-\infty, \infty)$.  To see this, observe that the map 
$$u \mapsto \frac{1}{1 + e^{u}}, \quad -\infty < u < \infty,$$ is a diffeomorphism of $(-\infty, \infty)$ onto $(0, 1)$ and some calculus shows that 
$$(\Phi_{p}  f)(u) := \frac{e^{u/p}}{(1 + e^u)^{2/p}} f\big(\frac{1}{1 + e^u}\big)$$
defines an isometric map from $L^p(0, 1)$ onto $L^{p}(-\infty, \infty)$. Moreover, another calculation, similar to the one in \cite{CL2},  reveals that 
$$(\Phi_{p} S_{t} \Phi_{p}^{-1} g)(u) = e^{-t/p} \Big(\frac{1 + e^{t + u}}{1 + e^u}\Big)^{\frac{2}{p} - 1} g(u + t), \quad g \in L^{p}(-\infty, \infty).$$ This brings us to the following. 

\begin{Theorem}
Let $1 < p < \infty$. For a closed subspace $\mathcal{M} \subset L^p(0, 1)$, the following are equivalent. 
\begin{enumerate}
\item $\mathcal{M}$ is invariant for $C_p$;
\item $\mathcal{M}$ is invariant for all $S_t$, $t \geq 0$;
\item $\Phi_{p} \mathcal{M} \subset L^p(-\infty, \infty)$ is invariant for each operator 
\begin{equation}\label{88yySS}
g \mapsto \Big(\frac{1 + e^{t + u}}{1 + e^u}\Big)^{\frac{2}{p} - 1} g(u + t), \quad t \geq 0.
\end{equation}
\end{enumerate}
\end{Theorem}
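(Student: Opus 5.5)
The equivalence of (a) and (b) is exactly Theorem \ref{invariant s7s}, so the plan is to prove (b) $\Leftrightarrow$ (c) by conjugating with the isometry $\Phi_p$. Write $T_t := \Phi_p S_t \Phi_p^{-1}$ on $L^p(-\infty,\infty)$. Since $\Phi_p$ is a surjective isometry, a closed subspace $\mathcal{M}\subset L^p(0,1)$ satisfies $S_t\mathcal{M}\subset\mathcal{M}$ if and only if $\Phi_p\mathcal{M}$ is closed and $T_t\Phi_p\mathcal{M}\subset\Phi_p\mathcal{M}$. Indeed, $S_t\mathcal{M}\subset \mathcal{M}$ gives $\Phi_pS_t\Phi_p^{-1}(\Phi_p\mathcal{M})\subset \Phi_p\mathcal{M}$, and the converse follows the same way after applying $\Phi_p^{-1}$.

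By the displayed formula preceding the theorem, $T_t = e^{-t/p} W_t$, where $W_t$ is the operator in \eqref{88yySS}. Because $e^{-t/p}$ is a nonzero scalar, a subspace is invariant for $T_t$ if and only if it is invariant for $W_t$. Together with the previous paragraph, this gives (b) $\Leftrightarrow$ (c).

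The only real work is to justify the two ingredients quoted before the theorem. The first is that $\Phi_p$ is an isometry onto $L^p(\mathbb{R})$. Substitute $x = (1+e^u)^{-1}$, so $dx = e^u(1+e^u)^{-2}\,du$, and note that $|(\Phi_pf)(u)|^p = e^u(1+e^u)^{-2}|f(x)|^p$. Surjectivity follows because the inverse map $g\mapsto (1+e^u)^{2/p}e^{-u/p}g(u)$, with $u=\log((1-x)/x)$, is explicit.

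The second ingredient is the conjugation formula, and I expect this bookkeeping to be the main obstacle. Under $x=(1+e^u)^{-1}$, the map $\phi_t$ becomes translation. To see this, check that $(1-\phi_t(x))/\phi_t(x) = e^t(1-x)/x$, so $\phi_t(x) = (1+e^{u+t})^{-1}$. Also $\phi_t(x)/x = (1+e^u)/(1+e^{u+t})$. Now compute
$$(\Phi_pS_t\Phi_p^{-1}g)(u) = \frac{e^{u/p}}{(1+e^u)^{2/p}}\cdot\frac{1+e^u}{1+e^{u+t}}\cdot\frac{(1+e^{u+t})^{2/p}}{e^{(u+t)/p}}\,g(u+t).$$
After collecting the powers of $1+e^u$ and $1+e^{u+t}$, this equals $e^{-t/p}\big(\frac{1+e^{t+u}}{1+e^u}\big)^{2/p-1}g(u+t)$, which is the formula claimed before the theorem.
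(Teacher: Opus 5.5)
Your proposal is correct and is the argument the paper intends. The equivalence (a)$\Leftrightarrow$(b) is the earlier invariant-subspace theorem, and (b)$\Leftrightarrow$(c) follows by conjugating with the surjective isometry $\Phi_p$ and discarding the nonzero scalar $e^{-t/p}$; the paper only asserts the isometry property and the conjugation formula, and your verifications of both check out.
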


When $p = 2$, observe that \eqref{88yySS} becomes the translation operator 
$$g \mapsto g(\cdot + t)$$ on $L^2(-\infty, \infty)$ and the common invariant subspaces for $t \geq 0$ are known by a theorem of Helson \cite{MR0171178} (see the discussion in \cite{CL2}). When $p \not = 2$, the common invariant subspaces of the semigroup \eqref{88yySS} seem to be more complicated and do not, as of yet, have a tangible description as in the $p = 2$ case. 



\section{Cyclicity}\label{Cy}

Another line of research is the cyclicity of $C_p$. Here we ask if there exist an $f \in L^p(0, 1)$ such that 
$$\overline{\operatorname{span}}\{C_{p}^{n} f: n \geq 0\} = L^p(0, 1)?$$ Such an $f$ is called a {\em cyclic vector} for $C_p$. Can we characterize such $f$? Though we don't know a complete characterization, we do know that cyclic vectors exist. 

\begin{Theorem}
The function $\chi \equiv 1$ on $[0, 1]$ is a cyclic vector for $C_p$ for all $1 < p < \infty$. 
\end{Theorem}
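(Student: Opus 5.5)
The plan is to avoid computing $C_p^n\chi$ explicitly and instead pass to the semigroup using Theorem \ref{invariant s7s}. Let $\mathcal{M} := \overline{\operatorname{span}}\{C_p^n \chi : n \geq 0\}$. By construction $\mathcal{M}$ is a closed subspace of $L^p(0,1)$ with $C_p\mathcal{M} \subset \mathcal{M}$ and $\chi \in \mathcal{M}$. By Theorem \ref{invariant s7s}, $S_t\mathcal{M} \subset \mathcal{M}$ for every $t \geq 0$. It therefore suffices to show that the smallest closed subspace containing $\chi$ and invariant under all $S_t$ is all of $L^p(0,1)$.

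Next we compute the orbit of $\chi$ under the semigroup. From \eqref{sdfsdfvVVV},
$$(S_t\chi)(x) = \frac{\phi_t(x)}{x} = \frac{e^{-t}}{(e^{-t}-1)x+1}.$$
Writing $s = 1-e^{-t} \in [0,1)$, this becomes $(1-s)/(1-sx)$. Hence the functions $k_s(x) := (1-sx)^{-1}$ belong to $\mathcal{M}$ for every $0 \leq s < 1$.

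We then extract the monomials by differentiating in $s$ at $s=0$, using that $\mathcal{M}$ is a closed subspace. We argue by induction. Suppose $1, x, \dots, x^{n-1} \in \mathcal{M}$. Then
$$h_s := s^{-n}\Big(k_s - \sum_{j=0}^{n-1} s^j x^j\Big) = \frac{x^n}{1-sx} \in \mathcal{M}, \qquad 0<s<1.$$
For $0<s\leq \tfrac12$ we have $|h_s(x)-x^n| = \dfrac{s x^{n+1}}{1-sx} \leq 2s$ uniformly on $[0,1]$, so $h_s \to x^n$ uniformly and therefore in $L^p(0,1)$. Closedness of $\mathcal{M}$ gives $x^n \in \mathcal{M}$. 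Thus $\mathcal{M}$ contains all polynomials.

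Polynomials are uniformly dense in $C[0,1]$, which is dense in $L^p(0,1)$ for $1<p<\infty$. Hence $\mathcal{M} = L^p(0,1)$, and $\chi$ is cyclic.

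The only genuinely nontrivial input is the passage from $C_p$-invariance to $S_t$-invariance, and that is supplied by Theorem \ref{invariant s7s}. The remaining step needing care is the convergence of the difference quotients, and the explicit closed form of $h_s$ makes this routine. As a fallback, if one wanted to avoid Theorem \ref{invariant s7s}, one could compute $C_p^n\chi$ directly; these are polylogarithm-type functions of $x$. One would then show they span a space containing $\log(1-x)/x$-type functions whose span is dense, but this route looks messier.
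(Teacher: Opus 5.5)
Your proof is correct. The first half matches the paper's. Both set $\mathcal{M}=\overline{\operatorname{span}}\{C_p^n\chi: n\geq 0\}$, invoke Theorem~\ref{invariant s7s} to get $S_t\mathcal{M}\subset\mathcal{M}$, and use the explicit orbit $S_t\chi = e^{-t}/((e^{-t}-1)x+1)$.

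You differ in how you show that these functions have dense span. The paper argues by duality. An annihilator $g\in L^q(0,1)$ satisfies $\int_0^1 g(x)(1+\lambda x)^{-1}\,dx=0$ for $-1<\lambda<0$. The H\"older moment bound makes $z\mapsto\int_0^1 g(x)(1+zx)^{-1}\,dx$ analytic on $D(0,1)$, the identity theorem forces all moments $\int_0^1 x^n g$ to vanish, and Weierstrass approximation plus Hahn--Banach finish.

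You instead stay in $L^p$ itself. The difference quotients $s^{-n}\bigl(k_s-\sum_{j<n}s^jx^j\bigr)=x^n/(1-sx)$ converge uniformly to $x^n$, so the monomials lie in $\mathcal{M}$ by induction. In effect you extract the Taylor coefficients at $s=0$ of the $L^p$-valued map $s\mapsto k_s$. This is the primal counterpart of the paper's extraction of the Taylor coefficients of the scalar function $\int_0^1 g(x)(1+zx)^{-1}\,dx$.

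Your route is more elementary and constructive. It needs no Hahn--Banach, no moment estimate and no analytic continuation, and it exhibits each $x^n$ explicitly as a norm limit of combinations of the $S_t\chi$.

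The duality route is somewhat more flexible. It only needs an analytic function to vanish on a set with an accumulation point in the disc. So it shows, for instance, that $\{S_{t_k}\chi\}$ already has dense span whenever $t_k$ accumulates at some point of $[0,\infty)$, whereas your difference quotients are taken specifically as $t\to 0^+$. Both arguments rely entirely on Theorem~\ref{invariant s7s} to pass from $C_p$ to the semigroup.
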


\begin{proof}
Fix $1 < p < \infty$ and let 
$\mathcal{M} = \overline{\operatorname{span}}\{C_{p}^{n} \chi: n \geq 0\} .$ We want to show that $\mathcal{M} = L^p(0, 1)$. Obviously $C_{p} \mathcal{M} \subset \mathcal{M}$ and thus 
$S_{t} \mathcal{M} \subset \mathcal{M}$ for all $t \geq 0$ (Theorem \ref{invariant s7s}). Hence $S_{t} \chi \in \mathcal{M}$ for all $t \geq 0$. If $g \in L^q(0, 1)$ annihilates $\mathcal{M}$, then 
$$0 = \int_{0}^{1} (S_{t} \chi \cdot g) dx = 0, \quad t \geq 0.$$
Since 
$$S_{t} \chi(x) = \frac{e^{-t}}{(e^{-t} - 1) x + 1},$$
such an annihilating $g$ satisfies 
$$\int_{0}^{1}  \frac{g(x)}{(e^{-t} - 1) x + 1} dx = 0, \quad t \geq 0,$$ and thus 
$$\int_{0}^{1} \frac{g(x)}{1 + \lambda x} dx = 0, \quad -1 < \lambda < 0.$$
H\"{o}lder's inequality shows that 
$$\Big|\int_{0}^{1} x^n g(x) dx\Big| = O(n^{-1/p})$$
and thus
$$\int_{0}^{1} \frac{g(x)}{1 + z x} dx = \sum_{n = 0}^{\infty} (-1)^n z^n \int_{0}^{1} x^n g(x) dx$$ defines an analytic function on $D(0, 1)$ that is equal to zero on the interval $(-1, 0)$ and thus zero on $D(0, 1)$. By the uniqueness of power series coefficients, 
$$\int_{0}^{1} x^n g(x) dx = 0, \quad n \geq 0.$$ The Stone--Weierstrass theorem gives us  $g = 0$ almost everywhere and the Hahn--Banach separation theorem implies that $\operatorname{span}\{S_t \chi: t \geq 0\}$ is dense in $L^p(0, 1)$. Thus $\mathcal{M} = L^p(0, 1)$. 
\end{proof}

\section{A fnal remark}

A similar integral substitution, as in \S \ref{IJnvariant}, shows that 
$$(\Phi_p C_p \Phi_{p}^{-1} g)(u) = \int_{u}^{\infty} \Big(\frac{1 + e^u}{1 + e^s}\Big)^{1 - \frac{2}{p}} e^{\frac{u - s}{p}} g(s) ds, \quad g \in L^{p}(-\infty, \infty).$$
When $p = 2$, the above becomes a convolution operator on $L^2(-\infty, \infty)$. Now apply a Fourier transform argument to see that $C_2$ is unitarily equivalent to the multiplication operator 
$$g(x) \mapsto \frac{1}{\frac{1}{2} - i x} g(x)$$
on $L^2(-\infty, \infty)$ \cite{CL2}.
The known spectral properties of this multiplication operator, as well as its normality, will show that  $C_2$ is a normal operator with spectrum $\{z: |z - 1| = 1\}$ (the closure of the range of $(\frac{1}{2} - i x)^{-1}$). When $p \not = 2$,  the operator $\Phi_{p}C_{p} \Phi_{p}^{-1}$ on $L^p(-\infty, \infty)$ is more complicated. It seems natural to ask what  notable operator theory properties does $C_p$ enjoy?

\bibliographystyle{plain}

\bibliography{references}

\end{document}